\documentclass[reqno,12pt,draft]{amsart}
\usepackage{amsmath,amsthm,amsfonts,amssymb}
\usepackage[cp1251]{inputenc}

\date{}
\setlength{\textwidth}{16truecm}
\setlength{\textheight}{25truecm}
\setlength{\oddsidemargin}{0pt}
\setlength{\oddsidemargin}{0cm}
\setlength{\evensidemargin}{0cm}
\setlength{\topmargin}{-25pt}

\newtheorem{theorem}{Theorem}[section]
\newtheorem{lemma}[theorem]{Lemma}

\newtheorem{remark}[theorem]{Remark}
\newtheorem{proposition}[theorem]{Proposition}

\newtheorem{example}[theorem]{Example}
\numberwithin{equation}{section}

\begin{document}

\centerline{\sc On uniqueness of solutions to nonlinear Fokker--Planck--Kolmogorov equations}

\vskip 2 ex

{\sc Oxana A. Manita ${}^{a}$ \footnote{Corresponding author
\par
e-mails: oxana.manita@gmail.com (O.A.Manita), mcliz@mail.ru (M.S.Romanov), starticle@mail.ru (S.V.Shaposhnikov).},
Maxim S. Romanov${}^{a}$, Stanislav V. Shaposhnikov${}^{a}$}

\vskip 1. ex

\quad ${}^{a}$ Department of Mechanics and Mathematics, Moscow State
University, 119991, GSP-1, 1 Leninskie Gory, Moscow, Russia;

\vskip 4 ex
We study uniqueness of flows of probability measures solving the Cauchy problem
for nonlinear Fokker--Planck--Kolmogorov
equation with unbounded coefficients. Sufficient conditions for uniqueness are indicated and
examples of non-uniqueness are constructed.

\vskip 2 ex

{\bf Keywords:} nonlinear Fokker--Planck--Kolmogorov equation, McKean--Vlasov equation, uniqueness of solutions to the Cauchy
problem for nonlinear parabolic equations.

\vskip 2 ex

{\bf MSC:} 35K55, 35Q84, 35Q83.

\vskip 2 ex

\section{\sc Introduction.}

In this paper we study  uniqueness of solutions to the following Cauchy problem
for a nonlinear Fokker--Planck--Kolmogorov equation:
\begin{equation}\label{e1}
\partial_t\mu=\partial_{x_i}\partial_{x_j}(a^{ij}(\mu, x, t)\mu)-\partial_{x_i}(b^i(\mu, x, t)\mu),
\quad \mu|_{t=0}=\nu.
\end{equation}
A solution is a  finite Borel measure $\mu(dxdt)=\mu_t(dx)\,dt$ given by a flow of
probability measures $(\mu_t)_{t\in[0, T]}$ on $\mathbb{R}^d$.
The equation is understood in the sense of distributions. Precise definitions are given below.
Throughout the paper we assume that the diffusion matrix  $A=(a^{ij})$ is symmetric and non-negative definite.

The main goal of this work is to establish sufficient conditions for uniqueness that
allow nonsmooth and unbounded coefficients, for instance,  coefficients given by convolutions with
kernels rapidly growing at infinity. Moreover, we investigate more difficult cases
where the diffusion matrix is degenerate or depends on a solution.
Finally, we construct several examples of nonuniqueness.

Equations of this form, called Fokker--Planck--Kolmogorov equations, generalize several types of equations
important for applications: transport equations, Vlasov equations, linear Fokker--Planck--Kolmogorov equations,
and McKean--Vlasov  equations.
Such equations describe the evolution of the initial measure $\nu$
under the action of a flow generated by a system of ODEs or SDEs.
An extensive literature is devoted to each type of equations.
Let us mention the classical paper by  Kolmogorov \cite{K},
where he derived linear Fokker--Planck--Kolmogorov equations for the transition probabilities of
 diffusion processes and the papers by McKean \cite{MM}, \cite{M} concerned with nonlinear
parabolic equations.
In the general case, such equations and the well-posedness of the martingale problem
were studied by Funaki~\cite{Fun}.
In particular, he  obtained the following uniqueness result.
Let us consider coefficients of the form  $a^{ij}(x, \mu_t)$ and $b^i(x, \mu_t)$.
If the corresponding martingale problem and the corresponding linear equation have unique solutions,
then the Cauchy problem~(\ref{e1}) has a unique solution. Uniqueness for the martingale problem
was established under the following assumption:
$$
|\sqrt{A}(x, \mu_t)-\sqrt{A}(y, \sigma_t)|+|b(x, \mu_t)-b(y, \sigma_t)|\le C|x-y|+G(w_p(\mu_t, \sigma_t))
$$
where  $A=(a^{ij})$, $w_p$ is the Kantorovich  $p$-metric and $G$
is an increasing continuous function on $[0, +\infty)$ with $G(0)=0$ and
$\displaystyle\int_{0+}G^{-2}(\sqrt{u})\,du=+\infty$.
Thus, only globally Lipschitz coefficients are  admitted. Moreover, the dependence of the coefficients on
$\mu$ allows in fact only convolutions with polynomially growing kernels.
Let us emphasize also a rather conditional character of the uniqueness statement. Indeed,
it requires to have a priori the  uniqueness for the corresponding linear problem and the martingale problem.
In the one-dimensional case, the uniqueness of the martingale problem
(or equivalently, the uniqueness of a weak
solution to the corresponding McKean--Vlasov SDE) is studied in
\cite{B1}, \cite{B2}, where the diffusion matrix is assumed to be unit
and the drift is a convolution with an odd and monotone kernel.
Strong solutions to McKean--Vlasov SDEs are studied in \cite{V}.
Some examples of nonuniqueness for equations of the same
type with the identically zero diffusion matrix are constructed in \cite{S}.
Vlasov equations with smooth coefficients were by Dobrushin \cite{D},
who proved some   existence and uniqueness theorems employing the
 contraction mapping principle with a special choice
of a probability metric. Surveys of recent results on  Vlasov equations are given in  \cite{VVK1}, \cite{VVK2}.
Transport equations, linear  Fokker--Planck--Kolmogorov equations, Vlasov equations and Boltzmann equations
with Sobolev coefficients are investigated in  \cite{DL1}, \cite{DL2}, and \cite{LBL}, where
the method of renormalized solutions is developed and  existence and uniqueness problems are
studied in the space~$L^p$. Many papers (see, e.g., \cite{AS}, \cite{AF}, and \cite{JOT})
dealing with the unit diffusion matrix and drifts of the form
$$
b(x, \mu_t)=\nabla\psi(x)+\int \nabla W(x-y)\,d\mu_t
$$
develop the gradient flow approach. Diverse physical problems leading to the study of nonlinear
Fokker--Planck--Kolmogorov equations can be found in  \cite{DFP}.
Despite the vast literature on the topic,
there are almost no general results concerning uniqueness
in the cases of non-Lipschitz and rapidly growing coefficients.
In this general formulation the existence of the solution has been
investigated in \cite{BRSH-n} \cite{MSHD}, and \cite{MSH}.
For surveys of results concerning existence and
uniqueness in the linear case, see \cite{BDR}, \cite{BKR-s}, and \cite{BRSH11}.

In the present work to prove uniqueness we use a modification of the classical Holmgren method,
which can be illustrated as follows. Suppose there are two solutions  $\mu$ and $\sigma$.
We solve the adjoint problems
$$
\partial_tf+a^{ij}(\mu)\partial_{x_i}\partial_{x_j}f+b^i(\mu)\partial_{x_i}f=0, \quad f|_{s=t}=\psi,
$$
where $\psi\in C^{\infty}_0(\mathbb{R}^d)$, in the class of sufficiently smooth functions.
Multyplying by  $f$ the equation (\ref{e1}) and integrating by parts, we come to
$$
\int\psi\,d(\mu_t-\sigma_t)=\int_0^t\int(L_{\mu}-L_{\sigma})f\,d\sigma_s\,ds.
$$
Now let us choose a metric on the space of probability measures, for example, Kantorovich  1-metric
$$
W_1(\mu_t, \sigma_t)=\sup\Biggl\{\int\psi\,d(\mu_t-\sigma_t): |\nabla\psi|\le 1\Biggr\}
$$
on the subset of probability measures having finite first moments. Now we estimate the right-hand side
with it. If  $a^{ij}$ is independent of the solution, then the right-hand side has the form
$$
\int_0^t\int\langle b(\mu)-b(\sigma), \nabla f\rangle\,d\sigma_s\,ds.
$$
Suppose  $|b(\mu)-b(\sigma)|\le CW_1(\mu_t, \sigma_t)$, and  $|\nabla f|$ due to the maximum principle is
dominated by  $\max|\nabla\psi|$. Since  $\psi$ is arbitrary, we come to
$$
W_1(\mu_t, \sigma_t)\le C\int_0^tW_1(\mu_s, \sigma_s)\,ds,
$$
and Gronwall's inequality yields  $W_1(\mu_t, \sigma_t)=0$.

One of the main difficulties of this approach is solving the adjoint problem with nonregular
and unbounded coefficients. To evade this difficulty, we approximate the operator~$L$
with a sequence of operators with smooth coefficients and solve the adjoint problem for
them. Other difficult task is to choose a metric on the space of measures. This choice is determined by
assumptions on $\psi$, which are, in their turn, determined by apiori estimates for~$f$.
In the present paper we consider three different situations: the
diffusion matrix $A$ is non-degenerate and independent of the solution,
the diffusion matrix $A$ is degenerate and independent of the solution, the
diffusion matrix $A$ depends on the solution. In the first case we choose a weighted total variation metric.
 This choice is partially motivated by the fact that in this case solutions
 have densities with respect to Lebesgue measure
 and it is natural to consider weighted~$L^1$ spaces.
 In the second situation we use the generalisation of Fortet-Mourier metric.
 Since this metric is different from standard ones~(cf. \cite{Bogachev-2},\cite{BKol},\cite{Rachev91}),
 we also study the relation between the new metric and Kantorovich $p$ - metric
 and classical Fortet-Mourier metric.

Let us give the precise definitions. Recall that a measure $\mu$ on $\mathbb{R}^d\times[0, T]$ is given by a flow of
probability measures  $(\mu_t)_{t\in[0, T]}$ on $\mathbb{R}^d$  if
$\mu_t\ge 0$, $\mu_t(\mathbb{R}^d)=1$, for each Borel set $B$ the function $t\mapsto \mu_t(B)$
 is measurable and
$$
\int_0^T\int u\,d\mu=\int_0^T\int u\,d\mu_t\,dt \quad \forall u\in C^{\infty}_0(\mathbb{R}^d\times(0, T)).
$$
For shortness further we write  $\mu(dxdt)=\mu_t(dx)\,dt$. Set
$$
L_{\mu}u=a^{ij}(\mu, x, t)\partial_{x_i}\partial_{x_j}u+b^i(\mu, x, t)\partial_{x_i}u.
$$

We shall say that  $\mu(dxdt)=\mu_t(dx)\,dt$ satisfies the Cauchy problem  (\ref{e1}) if
 we have mappings $(x, t)\mapsto a^{ij}(\mu, x, t)$,
$(x, t)\mapsto b^{i}(\mu, x, t)$ and  $a^{ij}, b^i\in L^1(\mu, U\times[0, T])$ for each ball $U\subset\mathbb{R}^d$
and for each function  $\varphi\in C^{\infty}_0(\mathbb{R}^d)$
 the following identity holds
\begin{equation}\label{r1}
\int\varphi\,d\mu_t=\int\varphi\,d\nu+\int_0^t\int L_{\mu}\varphi\,d\mu_s\,ds
\end{equation}
for all $t\in[0, T]$.
Sometimes it is more convenient to use an equivalent definition that requires  (instead of (\ref{r1}))
the identity
\begin{equation}\label{r2}
\int u(x, t)\,d\mu_t=\int u(x, 0)\,d\nu+\int_0^t\int\bigl[\partial_tu+L_{\mu}u\bigr]\,d\mu_s\,ds
\end{equation}
for all $t\in[0, T]$ for each test function $u\in C^{1, 2}(\mathbb{R}^d\times(0, T))\bigcap C(\mathbb{R}^d\times[0, T))$
that equals zero outside some ball $B\subset\mathbb{R}^d$.
In particular, the flow of probability measures  $\mu_t$ satisfying the Cauchy problem is continuous in  $t$
with respect to the weak convergence of probability measures. This follows directly from the continuity in $t$
of the integrals  $\displaystyle\int\varphi\,d\mu_t$ for each function  $\varphi\in C^{\infty}_0(\mathbb{R}^d)$;
the latter is ensured by the identity  (\ref{r1}).

Since we admit unbounded coefficients and convolutions with unbounded kernels,
we consider measures that integrate some function, growing at infinity.
It will be explained that this "apriori integrability" can be ensured by an appropriate
Lyapunov function. So, we consider solution from the class  $M_T(V)$ of measures  $\mu$ on $\mathbb{R}^d\times[0, T]$
given by flows of probability measures  $(\mu_t)_{t\in[0, T]}$ and
satisfying
\begin{equation}\label{est}
\sup_{t\in[0, T]}\int V(x)\,d\mu_t<\infty,
\end{equation}
where  $V\ge 1$ and, generally speaking, $V$  unbounded as  $|x|\to\infty$.

Remind sufficient conditions for {\it existence} of solutions, established in~\cite{MSH}.
Set~$\tau_0>0$.  $C^{+}([0, \tau_0])$ denotes the set of nonnegative continuous functions on
$[0, \tau_0]$. For each function $\alpha\in C^{+}([0, \tau_0])$ and each  $\tau\in(0, \tau_0)$
let $M_{\tau, \alpha}(V)$ denote the set of measures  $\mu$ given by  flows of probability measures
$(\mu_t)_{t\in[0, \tau]}$ satisfying
$$
\int V(x)\,d\mu_t\le\alpha(t) \quad \forall \, t\in[0, \tau].
$$

{\it First condition:} there is a function  $V\in C^{2}(\mathbb{R}^{d})$,
$V(x)>0$, $\lim_{|x|\to+\infty}V(x)=+\infty$
and mappings  $\Lambda_{1}$ and $\Lambda_{2}$ of the space  $C^{+}([0,\tau_{0}])$
to $C^{+}([0,\tau_{0}])$ such that for all $\tau\in(0,\tau_{0}]$
and $\alpha\in C^{+}([0,\tau_{0}])$ functions $a^{ij}$
and $b^{i}$ are defined on  $M_{\tau,\alpha}=M_{\tau,\alpha}(V)$
and for all  $\mu\in M_{\tau,\alpha}$ and all $(x,t)\in\mathbb{R}^{d}\times[0,\tau]$
one has
$$
L_{\mu}V(x,t)\leq\Lambda_{1}[\alpha](t)+\Lambda_{2}[\alpha](t)V(x).
$$
We shall call such function  $V$ a Lyapunov function for the operator  $L_{\mu}$.

{\it Second condition:} for all $\tau\in(0,\tau_{0}]$, $\alpha\in C^{+}([0,\tau_{0}])$,
$\sigma\in M_{\tau,\alpha}$ and $x\in\mathbb{R}^{d}$ the mappings
$$
t\mapsto a^{ij}(x,t,\sigma)\quad\hbox{and}\quad t\mapsto b^{i}(x,t,\sigma)
$$
are Borel measurable on  $[0,\tau]$ and for each closed ball  $U\subset\mathbb{R}^{d}$
the mappings
$$
x\mapsto b^{i}(x,t,\sigma)\quad\hbox{and}\quad x\mapsto a^{ij}(x,t,\sigma)
$$
are bounded on  $U$ uniformly in  $\sigma\in M_{\tau,\alpha}$ and $t\in[0,\tau]$
 and continuous on  $U$ uniformly in  $\sigma\in M_{\tau,\alpha}$
and $t\in[0,\tau]$. Moreover, if a sequence  $\mu^{n}\in M_{\tau,\alpha}$
$V$-converges to  $\mu\in M_{\tau,\alpha}$, i.e. by definition
 for each function  $F\in C(\mathbb{R}^d)$ such that  $\lim_{|x|\to\infty}F(x)/V(x)=0$
 one has
$$\lim_{n\to\infty}\int F\,d\mu^n_t=\int F\,d\mu_t$$
for each $t\in[0, \tau]$,
 then for all  $(x,t)\in\mathbb{R}^{d}\times[0,\tau]$
one has
$$
\lim_{n\rightarrow\infty}a^{ij}(x,t,\mu^{n})=a^{ij}(x,t,\mu),
\quad\quad\lim_{n\rightarrow\infty}b^{i}(x,t,\mu^{n})=b^{i}(x,t,\mu).
$$

So, if this two conditions are fulfilled, there exists  $\tau\in(0, \tau_0]$ such that on the interval $[0, \tau]$
 there exists a solution  $\mu$ to the Cauchy problem  (\ref{e1}) and  $\mu$ is given by a flow of probability measures $\mu_t$
 satisfying  (\ref{est}) with $\tau$ instead of $T$.

In the present paper we use three different Lyapunov functions: the function $V$ to define the
class $M_T(V)$ in which we  solve our problem,
the function $W$ to determine the dependence of the coefficients on the solution,
the function $U$ to control the growth of the coefficients at infinity.

We point out that the method of Lyapunov functions for equations of this type
was introduced by Hasminskii in \, \cite{H}, and was recently developped in the study of linear \,
Fokker--Planck--Kolmogorov equations with unbounded coefficients~(for instance, cf.\cite{BR},\cite{BDR},\cite{BRSH11}).
For further consideration the following fact from \cite{BDR} (also cf. \cite{SH-MO}) is important.
If a measure  $\mu$ given by a flow of probability measures~$\mu_t$ satisfies the Cauchy problem~(\ref{e1})
and for some function  $V\in C^2(\mathbb{R}^d)$ such that  $\lim_{|x|\to\infty} V(x)=+\infty$ and $V\in L^1(\nu)$
there is a number  $C$ such that  $L_{\mu}V\le C+CV$, then for a.e.  $t\in[0, T]$ one has
$$
\int V(x)\,d\mu_t\le e^{Ct}+e^{Ct}\int V(x)\,d\nu.
$$
Moreover, if  $|\sqrt{A}\nabla V|\le \widetilde{C}V$ for some  $\widetilde{C}$,
then the latter estimate holds for  $V^m$ with any number $m\ge 1$.

The present paper consists of six sections. The first section is the introduction,
the second section contains an approximation lemma, the third and
the fourth deal with  non-degenerate and degenerate diffusion matrix independent of the solution. The fifth section
concerns the case of the diffusion matrix, depending on the solution, the sixth contains some examples of nonuniqueness.

\section{\sc Approximation lemma}

It is well-known that localy integrable or  bounded functions admit good approximations by convolutions with smooth kernels.
However we need to control the existence of the Lyapunov function for these approximations.

\begin{lemma}\label{approx}
 Suppose  $a^{ij}$, $b^i$ are Borel functions on  $\mathbb{R}^{d+1}$, bounded on $B\times[\alpha, \beta]$
for each ball  $B\subset\mathbb{R}^d$ and each interval $[\alpha, \beta]$.
Suppose there exist functions  $W\in C^2(\mathbb{R}^d)$ and $\Lambda\in C(\mathbb{R}^d)$ such that
$W\ge 1$ and
$$
a^{ij}(x, t)\partial_{x_ix_j}W(x)+b^i(x, t)\partial_{x_i}W(x)\le \Lambda(x)W(x)
\quad \forall (x, t)\in \mathbb{R}^{d+1}.
$$
Then the following assertions hold:

\, {\rm (i)} \, there exist sequences of functions  $a^{ij}_m, b^i_n\in C^{\infty}(\mathbb{R}^{d+1})$ such that
 for each measure  $\mu=\varrho(x, t)\,dx\,dt$, where  $\varrho$ is a Borel nonnegative function
 and
$\|\varrho(\,\cdot\,, t)\|_{L^1(\mathbb{R}^d)}=1$ for~a.~e.~$t$, one has
$$
\lim_{m\to\infty}\|a^{ij}_m-a^{ij}\|_{L^p(\mu, B\times[\alpha, \beta])}=0, \quad
\lim_{n\to\infty}\|b^{i}_n-b^{i}\|_{L^p(\mu, B\times[\alpha, \beta])}=0
$$
for each  $p\ge 1$, each ball $B\subset\mathbb{R}^d$ and each interval $[\alpha, \beta]$.

\, {\rm (ii)} \, Suppose  $a^{ij}$, $b^i$ are continuous in $x$ uniformly in~$t$
on $B\times[\alpha, \beta]$ for each ball  $B\subset\mathbb{R}^d$ and each interval $[\alpha, \beta]$.
Suppose $\mu$ is a Borel measure on $\mathbb{R}^{d+1}$ given by a flow of probability measures $\mu_t$ on $\mathbb{R}^d$,
i.e. $\mu(dxdt)=\mu_t(dx)\,dt$.
Then there exist sequences of functions $a^{ij}_m, b^i_n\in C^{\infty}(\mathbb{R}^{d+1})$ such that
$$
\lim_{m\to\infty}\|a^{ij}_m-a^{ij}\|_{L^p(\mu, B\times[\alpha, \beta])}=0, \quad
\lim_{n\to\infty}\|b^{i}_n-b^{i}\|_{L^p(\mu, B\times[\alpha, \beta])}=0
$$
for each  $p\ge 1$, each ball  $B\subset\mathbb{R}^d$ and each interval  $[\alpha, \beta]$.

\, {\rm (iii)} \, In  {\rm (i)} and {\rm (ii)} for each ball  $B\subset\mathbb{R}^d$ and each interval $[\alpha, \beta]$
one can find an index~$n_0$ such that  for all $m, n>n_0$
one has
$$
a^{ij}_m(x, t)\partial_{x_ix_j}W(x)+b^i_n(x, t)\partial_{x_i}W(x)\le (1+\Lambda(x))W(x)
\quad \forall (x, t)\in B\times[\alpha, \beta].
$$
\end{lemma}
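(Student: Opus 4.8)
The plan is to use a standard mollification: fix a nonnegative $\varphi\in C^\infty_0(\mathbb{R}^{d+1})$ with $\int\varphi=1$, set $\varphi_\varepsilon(y,s)=\varepsilon^{-(d+1)}\varphi(y/\varepsilon,s/\varepsilon)$, and put $a^{ij}_m:=a^{ij}*\varphi_{\varepsilon_m}$, $b^i_n:=b^i*\varphi_{\varepsilon_n}$ with scales $\varepsilon_m,\varepsilon_n\downarrow 0$. For (i): since $a^{ij}$ is Borel and bounded on any enlargement $B'\times[\alpha',\beta']$ of $B\times[\alpha,\beta]$, it lies in $L^1_{loc}(\mathbb{R}^{d+1})$, so $a^{ij}_m\to a^{ij}$ at every Lebesgue point, i.e. $dx\,dt$-a.e., while on $B\times[\alpha,\beta]$ one has $|a^{ij}_m|\le\sup_{B'\times[\alpha',\beta']}|a^{ij}|$ for $m$ large. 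Any $\mu=\varrho\,dx\,dt$ of the form in (i) satisfies $\mu\ll dx\,dt$ and $\mu(B\times[\alpha,\beta])\le\beta-\alpha<\infty$; hence $a^{ij}_m\to a^{ij}$ $\mu$-a.e., $|a^{ij}_m-a^{ij}|^p$ is dominated by a $\mu$-integrable constant, and the dominated convergence theorem gives convergence in $L^p(\mu,B\times[\alpha,\beta])$ for every $p\ge 1$; the same holds for $b^i_n$, and one fixed sequence serves all admissible $\mu$ simultaneously.

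For (ii) the measure $\mu=\mu_t(dx)\,dt$ may be singular in $x$, so I would use the extra continuity. Decompose $a^{ij}_m(x,t)-a^{ij}(x,t)$ into a part produced by the $x$-shift, bounded uniformly on $B\times[\alpha,\beta]$ by $\omega(\varepsilon_m)$ where $\omega$ is a common modulus of continuity in $x$ of $a^{ij}$ on $B'\times[\alpha',\beta']$, plus the pure time-mollification error $h_m(x,t)=\int[a^{ij}(x,t-s)-a^{ij}(x,t)]\phi_{\varepsilon_m}(s)\,ds$, where $\phi$ is the $s$-marginal of $\varphi$. For each fixed $x$ the map $t\mapsto a^{ij}(x,t)$ is bounded Borel, so $h_m(x,t)\to 0$ at each of its Lebesgue points; moreover $|h_m(x,t)-h_m(x',t)|\le 2\omega(|x-x'|)$ for all $t$ and all $m$. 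Taking a countable dense set $\{x_j\}\subset B$ and letting $N\subset[\alpha,\beta]$ be the ($dt$-null) union of the sets of non-Lebesgue points of the functions $a^{ij}(x_j,\cdot)$, this equicontinuity forces $h_m(x,t)\to 0$ for every $x\in B$ and every $t\notin N$. Since the time marginal of $\mu$ is dominated by Lebesgue measure, $\mu(B\times N)=0$, hence $h_m\to 0$ $\mu$-a.e.; being bounded on $B\times[\alpha,\beta]$, which has finite $\mu$-mass, $h_m\to 0$ in $L^p(\mu,B\times[\alpha,\beta])$, and with the uniform bound on the first term this proves the claim (here the sequences may depend on the given $\mu$, and the same argument applies to $b^i_n$). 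The step I expect to be the main obstacle is precisely this passage from ``a.e. $dx\,dt$'' to ``a.e. $\mu$'', and it is exactly what the continuity hypothesis in (ii) is designed to make possible.

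For (iii) the mollification structure is used directly. Fix $B,[\alpha,\beta]$, let $B'$ be the $\varepsilon_m$-enlargement of $B$, and take a common scale $\varepsilon_m=\varepsilon_n$ (the form of (iii) used below) so that the two convolutions combine. Since $\varphi_{\varepsilon_m}$ averages the coefficients over an $\varepsilon_m$-ball about $(x,t)$ and $D^2W,\nabla W$ are uniformly continuous on $B'$,
$$\sum_{ij}a^{ij}_m(x,t)\partial_{x_ix_j}W(x)+\sum_i b^i_n(x,t)\partial_{x_i}W(x)=\iint g(x-y,t-s)\,\varphi_{\varepsilon_m}(y,s)\,dy\,ds+R_m(x,t),$$
where $g(z,r):=\sum_{ij}a^{ij}(z,r)\partial_{x_ix_j}W(z)+\sum_i b^i(z,r)\partial_{x_i}W(z)$ and the remainder $R_m$, which collects the increments $\partial_{x_ix_j}W(x)-\partial_{x_ix_j}W(x-y)$ and $\partial_{x_i}W(x)-\partial_{x_i}W(x-y)$ weighted by $a^{ij},b^i$, is bounded uniformly on $B\times[\alpha,\beta]$ by a multiple of $\omega_{D^2W}(\varepsilon_m)+\omega_{\nabla W}(\varepsilon_m)\to 0$. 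The hypothesis gives $g(z,r)\le\Lambda(z)W(z)$ for all $(z,r)$, so the right-hand side is at most $(\Lambda W)_{\varepsilon_m}(x)+R_m(x,t)$, where $(\Lambda W)_{\varepsilon_m}$ is the mollification of the continuous function $\Lambda W$; since $(\Lambda W)_{\varepsilon_m}\to\Lambda W$ uniformly on $B$ and $W\ge 1$, one may choose $n_0$ so that $\sup_B|(\Lambda W)_{\varepsilon_m}-\Lambda W|+\sup_{B\times[\alpha,\beta]}|R_m|\le 1$ for $m,n>n_0$, and then $\sum_{ij}a^{ij}_m\partial_{x_ix_j}W+\sum_i b^i_n\partial_{x_i}W\le\Lambda(x)W(x)+W(x)=(1+\Lambda(x))W(x)$ on $B\times[\alpha,\beta]$, as required.
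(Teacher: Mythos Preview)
Your proposal is correct and follows essentially the same strategy as the paper --- standard mollification plus dominated convergence --- with one genuine but harmless variation.

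For (ii) the paper does something simpler than your decomposition: it mollifies \emph{only in the time variable}, setting $a^{ij}_m(x,t)=\int a^{ij}(x,t-s)\,\eta_{1/m}(s)\,ds$ (and similarly for $b^i_n$). The uniform continuity in $x$ then makes $a^{ij}_m$ jointly continuous; for each fixed $x$ one has $a^{ij}_m(x,t)\to a^{ij}(x,t)$ for a.e.\ $t$, and the uniform-in-$x$ modulus of continuity upgrades this to convergence for all $x$ and all $t$ outside a single $dt$-null set, exactly as in your argument with $h_m$. A second step then uniformly approximates these continuous functions by smooth ones. Your route --- full space-time mollification, then splitting off the $x$-shift error and handling the residual time-mollification $h_m$ via equicontinuity over a countable dense set --- reaches the same conclusion in one pass. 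The paper's two-step version has the small advantage that, since $\Lambda$ and $W$ are $t$-independent, the Lyapunov inequality in (iii) is preserved \emph{exactly} after time-mollification, with no ``$+1$'' needed at that stage.

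For (iii) your computation (move the continuous factors $\partial_{x_ix_j}W,\partial_{x_i}W$ inside the convolution at the cost of a uniformly small remainder, then use $g\le\Lambda W$ and uniform convergence of $(\Lambda W)_{\varepsilon}$) is precisely the paper's computation, phrased slightly differently. You explicitly restrict to a common scale $\varepsilon_m=\varepsilon_n$, and the paper's written argument is in effect the same single-index case; the stated two-index version is not actually derived in either place, but in the applications only the single-index case (or the case where one coefficient is already continuous) is needed, so this does not affect anything downstream.
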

\begin{proof}
Let  $\xi\in C^{\infty}_0(\mathbb{R}^d)$ and $\eta\in C^{\infty}_0(\mathbb{R})$
 be  smoothing kernels, i.e. $\xi\ge 0$, $\|\xi\|_{L^1(\mathbb{R}^d)}=1$ and
 $\eta\ge 0$, $\|\eta\|_{L^1(\mathbb{R}^1)}=1$.
For each  $\varepsilon>0$ set
$$
\xi_{\varepsilon}(x)=\varepsilon^{-d}\xi(x/\varepsilon),
\quad
\eta_{\varepsilon}(t)=\varepsilon^{-1}\eta(t/\varepsilon),
\quad
\omega_{\varepsilon}(x, t)=\xi_{\varepsilon}(x)\eta_{\varepsilon}(t).
$$
Let us prove (i). Sequences  $a^{ij}_m=\omega_{1/m}*a^{ij}$ and $b^i_n=\omega_{1/n}*b^i$
converge to  $a^{ij}$ and $b^i$ for~a.~e.~$(x, t)$ and are bounded on each set $B\times[\alpha, \beta]$
where $B$ is a ball.
Taking into account that $\|\varrho(\,\cdot\,, t)\|_{L^1(\mathbb{R}^d)}=1$ and using Lebesgue's dominated theorem, one gets the required
assertion. Let us check (iii) in this case.
Suppose  $g$ is a Borel function on $\mathbb{R}^{d+1}$, bounded on $B\times[\alpha, \beta]$
for each ball  $B$ and interval  $[\alpha, \beta]$. Suppose also that there exist functions
$\varphi, \psi\in C(\mathbb{R}^d)$ satisfying  $\varphi(x)g(x, t)\le \psi(x)$
for all $(x, t)\in\mathbb{R}^d$. As above,  $g_n=\omega_{1/n}*g$. To check  (iii)
it suffices to prove that for each ball  $B\subset\mathbb{R}^d$ and interval  $[\alpha, \beta]$
there is an index  $n_0$ such that for each $n>n_0$
one has  $\varphi(x)g_n(x, t)\le \psi(x)+1$ for all $(x, t)\in B\times[\alpha, \beta]$.
Indeed,
$$
\varphi(x)g_n(x, t)\le \psi(x)+
\int\int\Biggl(\bigl(\psi(y)-\psi(x)\bigr)+\bigl(\varphi(x)-\varphi(y)\bigr)g(y, \tau)\Biggr)
\omega_{1/n}(x-y, t-\tau)\,dy\,d\tau.
$$
The assertion follows from the continuity  $\varphi, \psi$ and the fact that $g$ is bounded.

Let us prove  (ii). It suffices to construct a sequence of continuous functions approximating
$a^{ij}$ and $b^i$ since continuous functions admit a uniform approximation by smooth functions.
For each  $x$ set $a^{ij}_m(x, t)=a^{ij}(x, \,\cdot\,)*\eta_{1/m}(t)$ and
$b^{i}_n(x, t)=b^{i}(x, \,\cdot\,)*\eta_{1/n}(t)$. Note that the uniform continuity of  $a^{ij}$ and $b^i$
 in  $x$  yields the continuity of  $a^{ij}_m$, $b^i_n$ in the pair of variables.
Moreover, since  $\Lambda$ and $W$ are independent of  $t$, inequality from (iii)
is obviously fulfilled for   $a^{ij}_m$, $b^i_n$. Due to the properties of convolutions for each  $x$
sequences  $a^{ij}_m(x, t)$ and $b^i_n(x, t)$ converge to
$a^{ij}_m(x, t)$ and $b^i_n(x, t)$ for a.e. $t$. Again using the uniform continuity in $x$ we derive
the existence of a set $J\subset[\alpha, \beta]$ of full Lebesgue measure such that the convergence takes place for
all $(x, t)\in\mathbb{R}^d\times J$. Lebesgue's dominated theorem ensures
$$
\lim_{m\to\infty}\int_B|a^{ij}_n(x, t)-a^{ij}(x, t)|^p\,d\mu_t=0, \quad
\lim_{n\to\infty}\int_B|b^i_n(x, t)-b^i(x, t)|^p\,d\mu_t=0
$$
for a.e. $t\in J$. Boundness of  $a^{ij}, b^i$ and the fact $\mu_t$ are probability measures
yield the required assertion.
\end{proof}

\begin{remark}\label{approx-r}\rm
\, (i) \, If the coefficients  $a^{ij}$ are continuous in  $(x, t)$,
 the assertion (iii) of Lemma stays true if one replaces  $a^{ij}_m$ with  $a^{ij}$ in the inequality. Indeed,
 since  $a^{ij}_m$ are constructed by convolutions with smooth kernels, they converge uniformly to  $a^{ij}$
 on each compact set.

\, (ii) \, From the proof one can see that if $|b^i(x, t)|\le \varphi(x)$
for some continuous function~$\varphi$ and
all $(x, t)\in\mathbb{R}^{d+1}$, then
 for each ball  $B\subset\mathbb{R}^d$ and interval $[\alpha, \beta]$
there is an index  $n_0$ such that for each $n>n_0$
one has $|b^i_n(x, t)|\le \varphi(x)+1$ for $(x, t)\in B\times[\alpha, \beta]$.

\,(iii) \, If
$\langle b(x+y, t)-b(x, t), y\rangle\le \theta(x)|y|^2$
for all  $x, y, t$ and some continuous function~$\theta$,
then for each ball  $B\subset\mathbb{R}^d$ and segment  $[\alpha, \beta]$
 there is an index  $n_0$ such that for each  $n>n_0$ one has
$\langle b_n(x+y, t)-b_n(x, t), y\rangle\le (\theta(x)+1)|y|^2$
for all~$(x, t)\in B\times[\alpha, \beta]$
and all~$y\in \mathbb{R}^d$.

\, (iv) \, If  $\lambda|\xi|^2\le \langle A(x, t)\xi, \xi\rangle \le\lambda^{-1}|\xi|^2$
for all  $x, y\in\mathbb{R}^d$ and $t\in\mathbb{R}$, then the same inequalities with the same constant $\lambda$
 hold for $A_m$ this follows from properties of the convolution and the kernel  $\omega_{\varepsilon}$.
Moreover, if  $A$ is Lipschitz or H\"older in  $x$ with the Lipschitz constant $\Lambda$, then  $A_m$ is Lipschitz in $x$ with
the Lipschitz constant $\Lambda$.
\end{remark}

\section{\sc Diffusion matrix is non-degenerate and is independent of  $\mu$.}

In this section we study the case when coefficients  $a^{ij}$ are independent of  $\mu$ and ${\rm det}A>0$.
So, suppose the following assumption holds.

\, (H1)\, There exists a continuous positive function  $\lambda$ on  $\mathbb{R}^d$ such that
$$\langle A(x, t)\xi, \xi\rangle\ge \lambda(x)|\xi|^2$$
 and all $(x, t)\in\mathbb{R}^d\times[0, T]$ and $\xi\in\mathbb{R}^d$,
and for each ball  $B$ there exist such numbers  $\gamma=\gamma(B)>0$ and $\kappa=\kappa(B)\in(0, 1]$ that
$$|a^{ij}(x, t)-a^{ij}(y, t)|\le \gamma|x-y|^{\kappa}$$
for all $x, y\in B$, $t\in[0, T]$.

Let  $\|\mu\|$ denote the total variation of the measure $\mu$. Note that if the measure is given
by a density  $\varrho$
with respect to Lebesgue measure then its total variation is equal to  $L^1$-norm of its density.
 Set  $\|\mu\|_W=\|W\mu\|$ for each measurable positive function~$W$.

Suppose a continuous function $V\ge 1$ is given. As above,  $\mathcal{M}_{T}(V)$
 denotes the set of such measures  $\mu$ on $\mathbb{R}^d\times[0, T]$ that
$\mu$ is given by a flow of probability measures  $\mu_t$ on $\mathbb{R}^d$ and
$$
\sup_{t\in[0, T]}\int V(x)\,d\mu_t<\infty.
$$

In addition to (H1) we assume the following conditions:

\, (H2) \, there exists a function  $W\in C^2(\mathbb{R}^d)$,
$W>0$, $\lim\limits_{|x|\to+\infty}W(x)=+\infty$ such that  $W(x)V^{-1/2}(x)$ is bounded on  $\mathbb{R}^d$
and for each  $\mu\in\mathcal{M}_{T}(V)$ there is a constant
$\alpha(\mu)>0$ such that
$$L_{\mu}W(x, t)\le \alpha(\mu)W(x)$$
for all $(x, t)\in \mathbb{R}^d\times[0, T]$;

\, (H3) \,
there exists a continuous increasing function  $G$ on $[0, +\infty)$ that  $G(0)=0$ and
$$
\lambda(x)^{-1}\bigl|b(\mu, x, t)-b(\sigma, x, t)\bigr|\le \sqrt{V(x)}G(\|\mu_t-\sigma_t\|_{W})
$$
for all  $(x, t)\in\mathbb{R}^d\times[0, T]$ and $\mu, \sigma\in \mathcal{M}_{T}(V)$;

\, (H4) \, there exists a function  $U\in C^2(\mathbb{R}^d)$,
$U>0$, $\lim_{|x|\to\infty}U(x)=+\infty$ such that for each $\mu\in \mathcal{M}_{T}(V)$
there exists a number  $\beta(\mu)>0$ such
that
$$
W^2(x)\lambda(x)^{-1}|b(\mu, x, t)|^2
+\frac{|\sqrt{A(x, t)}\nabla U(x)|^2}{U^2(x)}
+\frac{|L_{\mu}U(x, t)|}{U(x)}\le \beta(\mu)V(x)
$$
for all  $(x, t)\in\mathbb{R}^d\times[0, T]$.

\begin{theorem}\label{th1}
Suppose  {\rm (H1)}, {\rm (H2)}, {\rm (H3)}, {\rm (H4)} hold true.
If
$$
\int_{0+}\frac{du}{G^2(\sqrt{u})}=+\infty,
$$
then there exists at most one solution to the Cauchy problem {\rm (\ref{e1})} from the class
$\mathcal{M}_{T}(V)$.
\end{theorem}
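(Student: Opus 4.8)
The plan is to run a Holmgren-type duality argument adapted to unbounded, nonsmooth coefficients. Suppose $\mu,\sigma\in\mathcal M_T(V)$ both solve \eqref{e1}. Since $\det A>0$ by (H1), the associated linear parabolic equations have solutions with locally integrable densities, so write $\mu_t-\sigma_t=r_t\,dx$ and put $y(t)=\|\mu_t-\sigma_t\|_W=\int W|r_t|\,dx$, which is finite because $W\,V^{-1/2}$ is bounded (H2) and \eqref{est} holds. Fix $t\in(0,T]$ and $\psi\in C^\infty_0(\mathbb R^d)$. Freezing $\mu$ in the coefficients, I would use Lemma~\ref{approx} (with $W$ as the Lyapunov function there, and Remark~\ref{approx-r}(iv) to preserve ellipticity and H\"older bounds) to choose smooth $a^{ij}_m,b^i_n$ approximating $a^{ij}(\mu,\cdot,\cdot)$ and $b^i(\mu,\cdot,\cdot)$ and keeping $a^{ij}_m\partial_{ij}W+b^i_n\partial_iW\le(1+\Lambda)W$ on balls; then solve the backward Cauchy problem $\partial_sf+a^{ij}_m\partial_{ij}f+b^i_n\partial_if=0$ on $[0,t]$, $f|_{s=t}=\psi$, classically, and let $m,n\to\infty$ at the end.

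The key step is two a priori bounds on the adjoint solution $f=f_{m,n}$, uniform in $m,n$ and in $\psi$ subject to $|\psi|\le W$. First, a maximum-principle/Lyapunov bound: comparing $f$ with $e^{(\alpha(\mu)+1)(t-s)}W$ and using (H2) together with the approximate inequality from Lemma~\ref{approx}(iii) gives $|f(x,s)|\le CW(x)$. Second, and this is the heart of the matter, a weighted energy estimate for $\nabla f$: plugging $f^2$ into the weak identity \eqref{r2} for $\sigma$, and using $\partial_sf^2+L_\mu f^2=2a^{ij}\partial_if\partial_jf\ge 2\lambda|\nabla f|^2$ together with $L_\sigma f^2=L_\mu f^2+2f\langle b(\sigma)-b(\mu),\nabla f\rangle$, one obtains
\[
2\int_0^t\!\!\int\lambda|\nabla f|^2\,d\sigma_s\,ds\le\int\psi^2\,d\sigma_t+2\int_0^t\!\!\int|f|\,|b(\mu)-b(\sigma)|\,|\nabla f|\,d\sigma_s\,ds .
\]
By Young's inequality one absorbs the gradient term; using $|f|\le CW$ and (H4) applied to \emph{both} $\mu$ and $\sigma$ (so that $W^2\lambda^{-1}|b(\mu)-b(\sigma)|^2\le CV$), the remainder is bounded by $C\int_0^t\!\int V\,d\sigma_s\,ds\le CT$ via \eqref{est}, giving $\int_0^t\!\int\lambda|\nabla f|^2\,d\sigma_s\,ds\le C$. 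To make these manipulations legitimate one must check that the growing functions $W$, $f$, $f^2$ are admissible test functions in \eqref{r2}; this is precisely where the auxiliary function $U$ in (H4) is used, cutting off at radius $R$ and killing the error terms as $R\to\infty$ by means of the bounds $U^{-2}|\sqrt A\nabla U|^2\le\beta V$ and $U^{-1}|L_\mu U|\le\beta V$ and $\sup_s\int U\,d\sigma_s<\infty$.

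With the estimates in hand, multiply \eqref{e1} for $\mu$ and for $\sigma$ by $f$, subtract, and use $\mu_0=\sigma_0=\nu$; since $A$ is independent of the solution the second-order terms cancel and $\int\psi\,d(\mu_t-\sigma_t)=\int_0^t\!\int\langle b(\mu)-b(\sigma),\nabla f\rangle\,d\sigma_s\,ds$. Now take $\psi=\psi_k\in C^\infty_0$ with $|\psi_k|\le W$ and $\psi_k\to W\,\mathrm{sgn}(r_t)$ (dominated by $W|r_t|\in L^1$), so the left-hand side tends to $y(t)$; for the right-hand side, apply Cauchy--Schwarz in $(x,s)$, control the $\nabla f$-factor by the energy estimate, and bound $\int_0^t\!\int\lambda^{-1}|b(\mu)-b(\sigma)|^2\,d\sigma_s\,ds$ by $C\int_0^t G(y(s))^2\,ds$ using (H3) (which supplies the factor $G(y(s))$ and the weight $\sqrt V$) and (H4) together with \eqref{est} (which make the remaining weighted integral of the coefficients against $\sigma_s$ finite). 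Passing to the limit in $m,n$ and then in $k$ yields $y(t)^2\le C\int_0^t G(y(s))^2\,ds$ for all $t\in[0,T]$, with $y(0)=0$.

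Finally, set $z(t)=\sup_{s\le t}y(s)^2$; then $z$ is nondecreasing, $z(0)=0$, and, since $G$ is increasing, $z(t)\le C\int_0^t G(\sqrt{z(s)})^2\,ds$ with continuous right-hand side, so the conclusion follows from the standard Osgood/Bihari lemma: because $G$ is continuous and increasing, $G(0)=0$, and $\int_{0+}du/G^2(\sqrt u)=+\infty$, one gets $z\equiv 0$, hence $\mu_t=\sigma_t$ on $[0,T]$. I expect the main obstacle to be the weighted gradient estimate of the second paragraph: producing it for the adjoint solution against the measure $\sigma$ with unbounded coefficients is exactly what forces the technical hypothesis (H4) (the bound on $W^2\lambda^{-1}|b|^2$ to absorb the drift difference, and the Lyapunov function $U$ to legitimize growing test functions). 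A secondary but essential point is the density/regularity input that makes $\|\mu_t-\sigma_t\|_W$ and the choice of $\psi_k$ meaningful in the first place.
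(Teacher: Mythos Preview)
Your plan is correct and is essentially the paper's own argument: Holmgren duality, approximate the coefficients via Lemma~\ref{approx}, solve the backward problem, obtain $|f|\le CW$ by the maximum principle with (H2), derive a weighted $L^2$ gradient bound by testing $f^2$ against the solution, then Cauchy--Schwarz plus (H3) gives $\|\mu_s-\sigma_s\|_W^2\le C\int_0^s G^2(\|\mu_t-\sigma_t\|_W)\,dt$ and Osgood concludes.

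Two small points where your sketch is thinner than the paper. First, when you localize the duality identity with the cut-off $\varphi(U/K)$, the cross term $2\langle A\nabla\varphi_K^U,\nabla f\rangle$ appears against \emph{both} $\mu$ and $\sigma$, so you need the energy bound $\int_0^t\!\int|\sqrt A\nabla f|^2\varphi_K^U\,d\mu_s\,ds\le C$ as well; this is obtained by the same $f^2$-test, now against $\mu$, and is in fact cleaner since no $b(\mu)-b(\sigma)$ term arises. Second, your justification of the cut-off does not require $\sup_s\int U\,d\sigma_s<\infty$ (which is not assumed); what makes the boundary terms vanish is that the quantities $|\sqrt A\nabla U|^2/U^2$ and $|L_\mu U|/U$ are dominated by $\beta V$ on the annulus $\{K\le U\le 2K\}$ by (H4), and $\int_{K\le U\le 2K}V\,d(\mu_t+\sigma_t)\to 0$ as $K\to\infty$ since $V\in L^1(\mu_t+\sigma_t)$. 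The paper also makes the approximate operator globally bounded by gluing with the identity outside $\{W\le 2N\}$ (the cut-off $\varphi^W_{2N}$), which is what guarantees the classical backward solution and the Krylov--Priola bound on $D^2f$ used to pass $m\to\infty$; you should insert this step before solving the adjoint problem.
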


\begin{example}\rm
Let  $m\ge k\ge 1$ and for each measure  $\mu=(\mu_t)$ that
\begin{equation}\label{con1}
\sup_{t\in[0, T]}\int |x|^{2m}\,d\mu_t<\infty
\end{equation}
there exist constants $c_1(\mu)>0$, $c_2(\mu)>0$ such that
$$
\langle b(\mu, x, t), x\rangle\le c_1(\mu)(1+|x|^2), \quad |b(\mu, x, t)|\le c_2(\mu)(1+|x|^{m-k}).
$$
Suppose there exists such a number $c_3>0$ that
$$
|b(\mu, x, t)-b(\sigma, x, t)|\le c_3(1+|x|^m)\int(1+|y|^k)\,d|\mu_t-\sigma_t|
$$
for all  $\mu, \sigma$ satisfying  (\ref{con1}).

Then the  Cauchy problem
$$
\partial_t\mu=\Delta\mu-{\rm div}(b(\mu, x, t)\mu), \quad \mu|_{t=0}=\nu,
$$
has at most one solution satisfying  (\ref{con1}).

In particular, all assumptions are fulfilled for
$$
b(\mu, x, t)=-\int|x-y|^{n}(x-y)\mu_t(dy)
$$
 with  $m=2n+2$ and $k=n+1$.

To prove this fact it is sufficient to apply Theorem for $A=I$, $V(x)=1+|x|^{2m}$, $W(x)=1+|x|^{k}$ and $U(x)=1+|x|^2$.
\end{example}

Now we proceed to the proof of the Theorem.

\begin{proof}
Suppose  $\mu(dxdt)=\mu_t(dx)\,dt$ and $\sigma(dxdt)=\sigma_t(dx)\,dt$
are two solutions to the Cauchy problem~(\ref{e1}).
Set  $\alpha=\max\{\alpha(\mu), \alpha(\sigma)\}$,
$\beta=\sup\{\beta(\mu), \beta(\sigma), W(x)V^{-1/2}(x)\}$ and
$$
M=\sup_{t\in[0, T]}\int V(x)\,d(\mu_t+\sigma_t).
$$
Further we assume that conditions (H2) and (H4) are fulfilled with  $\alpha$ and $\beta$ indicated above.
Since  $A$ is non-degenerate, measures $\mu$ and $\sigma$ are given by densities $\varrho_{\mu}$
and $\varrho_{\sigma}$ with respect to Lebesgue measure (cf. \cite{BKR-s}),
and
$\|\varrho_{\mu}(\,\cdot\,, t)\|_{L^1(\mathbb{R}^d)}=1$,
$\|\varrho_{\sigma}(\,\cdot\,, t)\|_{L^1(\mathbb{R}^d)}=1$
for~a.~e.~$t$.
Hence we can apply statement (i) of Lemma \ref{approx}.

Let  $\varphi\in C^{\infty}_0(\mathbb{R})$ be a cut-off function such that  $0\le\varphi\le 1$, $\varphi(x)=1$ for $|x|<1$ and
$\varphi(x)=0$ for $|x|>2$. Assume also that for some  $C>0$ and
all $x\in\mathbb{R}$ one has
$|\varphi''(x)|^2+|\varphi'(x)|^2\le C\varphi(x)$.
For each  $N\ge 1$ set
$$
\varphi^W_N(x)=\varphi(W(x)/N) \quad \hbox{\rm and} \quad \varphi^U_N(x)=\varphi(U(x)/N),
$$
$$
B^W_N=\{x: \, W(x)\le N\} \quad \hbox{\rm and} \quad B^U_N=\{x: \, U(x)\le N\}.
$$

Suppose  $\psi\in C^{\infty}_0(\mathbb{R}^d)$ and $|\psi(x)|\le W(x)$. Fix such $K\ge 2$ that
 the support of  $\psi$ belongs to  $B_K^U$. Find such  $N=N(K)\ge 2$ that  $B_{2K}^U\subset B_N^W$ and fix the number $N(K)$.
 Note that  $\varphi^W_N(x)=1$ for $x\in{\rm supp}\,\varphi_N^U$.
The function $\varphi_N^U$ is used to localize the problem which permits to approximate
the coefficients of the operator  $L$ locally and not on the whole  $\mathbb{R}^d\times[0, T]$.
The function $\varphi_N^W$ cuts-off the coefficients in such a way that the new operator also
has a Lyapunov function, i.e.  (H2) is fulfilled.

Let us extend coefficients  $a^{ij}$, $b^i$ to $\mathbb{R}^{d+1}$ in the following way:
$a^{ij}(x, t)=a^{ij}(x, T)$, \, $b^i(x, t, \mu)=b^i(x, T, \mu)$ for $t>T$
and $a^{ij}(x, t)=a^{ij}(x, 0)$, \, $b^i(x, t, \mu)=b^i(x, 0, \mu)$ for $t<0$.
Obviously the extended coefficients satisfy
(H1), (H2), (H3) and (H4) on $\mathbb{R}^{d+1}$.

Now let us construct a new operator  $\widetilde{L}$ with smooth coefficients that approximates
 $L$ on  $B^W_{2N}\times[0, T]$.
Due to Lemma \ref{approx} there exist such sequences of functions
$b_n^i, a^{ij}_m\in C^{\infty}(\mathbb{R}^{d+1})$
that
$$
\lim_{m\to\infty}\|a^{ij}-a^{ij}_m\|_{L^{1}((\mu+\sigma), B^W_{2N}\times[0, T])}=0, \quad
\lim_{n\to\infty}\|b^{i}(\mu,\,\cdot\,,\,\cdot\,)-b^{i}_n\|_{L^{2}((\mu+\sigma), B^W_{2N}\times[0, T])}=0.
$$
According to Remark \ref{approx-r} the
matrix  $A_m=(a^{ij}_m)$ satisfies condition (H1) for each $m$ for all  $(x, t)\in\mathbb{R}^{d+1}$.
By Lemma \ref{approx} there exists an index  $n_0$ such that for all $m, n>n_0$ one has
$$
a^{ij}_m(x, t)\partial_{x_i}\partial_{x_j}W(x)+b^i_n(x, t)\partial_{x_i}W(x)\le(\alpha+1)W(x).
$$
Due to Remark \ref{approx-r} one has
$\lambda^{-1}(x)|b_n(x, t)|^2\le (\beta+1)V(x)$
for all $(x, t)\in B^W_{2N}\times[0, T]$.
Further  assume that $m, n>n_0$.

Set $\widetilde{A}=\varphi^W_{2N}A_m+(1-\varphi^W_{2N})I$, $\widetilde{b}=\varphi_{2N}b_n$ and
$\widetilde{L}=\widetilde{a}^{ij}\partial_{x_i}\partial_{x_j}+\widetilde{b}^i\partial_{x_i}$.

Now let us construct a Lyapunov function for $\widetilde{L}$ from $W$. We need it to estimate
maximum of the solution to the adjoint problem.
Let $W_N(x)=\zeta_K(W)$ where  $\zeta_N(z)=z$ for $z<N$, $\zeta(z)=N+1$ for $z>N+2$,
$0\le \zeta_{N}'\le 1$, $\zeta_{N}''\le 0$.
Note that  $W_N(x)\le W(x)$ and $\widetilde{L}W_N(x, t)\le (\alpha+1)W_N(x)$ for all
$(x, t)\in\mathbb{R}^{d+1}$. Indeed, $\widetilde{L}W_N=0$ outside $B^W_{N+2}$,
and on  $B^W_{N+2}$ one has
$$
\widetilde{L}W_N=\zeta_N'(W)\widetilde{L}W+\zeta''_N(W)|\sqrt{A_m}\nabla W|^2\le (\alpha+1)\zeta_N(W).
$$
Here the inequality $z\zeta_N'(z)\le \zeta_N(z)$ is used; it follows from
$(z\zeta_N'(z)-\zeta_N(z))'=z\zeta''_N(z)$ and the fact  $\zeta''_N(z)\le 0$.
This  $W_N$ is the required Lyapunov function.

Suppose  $s\in (0, T)$ and $f$ is a solution to the Cauchy problem
$\partial_t f+\widetilde{L}f=0$, $f|_{t=s}=\psi$. Since
 all coefficients are smooth and bounded together with all the derivatives,
 a smooth solution  $f$ exists and is bounded together with all the derivatives (for instance, cf. \cite{Fr}).
Function  $f$ depends on  $m$, $n$ and $N$,
but we omit the indeces for shortness.

Let us estimate $|f|$. Firstly we note that for fixed initial condition $\psi$
the maximum principle yields  $|f|\le \max|\psi|=C(\psi)$.
Now let us establish a bound independent of  $\psi$.
To do this, note that function $v=f/W_N$ satisfies
$$
\partial_tv+\widetilde{L}v+2\langle \widetilde{A}\nabla v, \nabla W_N\rangle W_N^{-1}+vW_N^{-1}\widetilde{L}W_N=0.
$$
According to the statement above,  $W_N^{-1}\widetilde{L}W_N\le \alpha+1$ and $|v(x, s)|=|\psi(x)|/W_N(x)\le 1$.
Maximum principle yields  $|v(x, t)|\le e^{(\alpha+1)(s-t)}$, which ensures
$$
|f(x, t)|\le W_N(x)e^{(\alpha+1)(s-t)}\le W(x)e^{(\alpha+1)(s-t)}.
$$
Note that by \cite[Theorem 2.8.]{KP} there exists a number  $C(n, N, \psi)$ that
$$
\sup_{(x, t)\in\mathbb{R}^d\times[0, s]}|\partial_{x_i}\partial_{x_j}f(x, t)|\le C(n, N, \psi).
$$
Further it will be important that  $C(n, N, \psi)$ is independent of  $m$.
Now let us estimate~$|\nabla_x f|$. Substituting a test function $u=\varphi_K^Uf^2$ into the identity (\ref{r2}) for
the solution $\mu$, we get
\begin{multline*}
\int \varphi_K^U f^2\,d\mu_s-\int \varphi_K^U f^2\,d\nu
=\int_0^s\int\Bigl[2f\varphi_K^U(L_{\mu}f-\widetilde{L}f)+
\\
f^2L_{\mu}\varphi_K^U
+2\langle A\nabla\varphi_K^U, \nabla f\rangle f
+|\sqrt{A}\nabla f|^2\varphi_K^U\Bigr]\,d\mu_t\,dt.
\end{multline*}
Note that
$$
L_{\mu}\varphi_K^U=K^{-1}\varphi'(U/K)L_{\mu}U+K^{-2}\varphi''(U/K)|\sqrt{A}\nabla U|^2.
$$
Since this expression doesn't equal zero only for $K\le U(x)\le 2K$, then
$$
|L_{\mu}\varphi_K^U|\le 2CI_K\Bigl(\frac{|L_{\mu}U|}{U}+\frac{|\sqrt{A}\nabla U|^2}{U^2}\Bigr),
$$
where $I_K$ is the indicator function of the set $\{x: \, K\le U(x)\le 2K\}$.
Similarly
$$
|\langle A\nabla\varphi_K^U, \nabla f\rangle f|\le 8C^2C^2(\psi)I_K\frac{|\sqrt{A}\nabla U|^2}{U^2}
+\frac{1}{4}\varphi_K^U|\sqrt{A}\nabla f|^2.
$$
Since
$$
|f\varphi_K^U(L_{\mu}-\widetilde{L})f|\le \varphi_K^U|f||A-A_m||D^2f|+
\varphi_K^U|f||\nabla f|\lambda^{-1}\bigl(|b_n|+|b|\bigr),
$$
the following bound holds:
\begin{multline*}
|f\varphi_K^U(\widetilde{L}f-L_{\mu}f)|
\le C(\psi)C(n,N,\psi)|A-A_m|+
\\
+16(\beta+1)V(x)e^{2(\alpha+1)(s-t)}+\frac{1}{4}|\sqrt{A}\nabla f|^2\varphi_{K}^U.
\end{multline*}
Gathering all bounds together one arrives at
$$
\int_0^s\int|\sqrt{A}\nabla f|^2\varphi_K^U\,d\mu_t\,dt\le C_1(1+R_{m}+Q_K),
$$
where
$$
R_{m}=C(n,N,\psi)\|A-A_m\|_{L^{1}(\mu+\sigma, B_{2N}^W\times[0, T])}, \quad
Q_K=C^2(\psi)\int_{K<U<2K} V\,d(\mu+\sigma)\Bigr)
$$
and  $C_1$ is independent of  $m, n, N, K, s$ and $\psi$.
A similar bound with  $\sigma$ instead of  $\mu$ holds. Indeed, we haven't used the fact that  $b_n$
 approximates  $b(\mu)$.
Now substitute  $u=f\varphi_K$ into identities (\ref{r2}) defining solutions $\mu$ and $\sigma$. Then
\begin{multline}\label{id1}
\int\varphi_K^U\psi\,d\mu_s-\int\varphi_K^Uf\,d\nu=\int_0^s\int\Bigl[\varphi_K(\widetilde{L}f-L_{\mu}f)+
\\
+fL_{\mu}\varphi_K^U+2\langle A\nabla\varphi_K^U, \nabla f\rangle\Bigr]\,d\mu_t\,dt,
\end{multline}
\begin{multline}\label{id2}
\int\varphi_K^U\psi\,d\sigma_s-\int\varphi_K^Uf\,d\nu=
\int_0^s\int\Bigl[\varphi_K(\widetilde{L}f-L_{\sigma}f)+
\\
+fL_{\sigma}\varphi_K^U+2\langle A\nabla\varphi_K^U, \nabla f\rangle\Bigr]\,d\sigma_t\,dt,
\end{multline}
Let us estimate individual terms in the right-hand side of  (\ref{id1}) and (\ref{id2}).
Since
$$
|L_{\mu}\varphi_K^U||f|+|L_{\sigma}\varphi_K^U||f|\le
2C(\psi)\Bigl(\frac{|L_{\mu}U|}{U}+\frac{|L_{\sigma}U|}{U}
+\frac{|\sqrt{A}\nabla U|^2}{U^2}\Bigr)\le 2(\beta+1)C(\psi)I_KV,
$$
one has
\begin{multline*}
\int_0^s\int|L_{\mu}\varphi_K^U||f|\,d\mu_t\,dt+\int_0^s\int|L_{\sigma}\varphi_K^U||f|\,d\sigma_t\,dt\le
\\
\le2(\beta+1)C(\psi)\int_0^T\int_{K\le V\le 2K}V\,d(\mu_t+\sigma_t)\,dt.
\end{multline*}
Due to Cauchy-Bunyakovsky inequality,
\begin{multline*}
\int_0^s\int|\sqrt{A}\nabla\varphi_K^U||\sqrt{A}\nabla f|\,d(\mu_t+\sigma_t)\,dt\le
\\
\le C\Bigl(\int_0^s|\sqrt{A}\nabla f|^2\varphi_K^U\,d(\mu_t+\sigma_t)\,dt\Bigr)^{1/2}
\Bigl(\int_0^T\int_{K<V<2K}\frac{|\sqrt{A}\nabla U|^2}{U^2}\,d(\mu_t+\sigma_t)\,dt\Bigr)^{1/2},
\end{multline*}
which is bounded by
$$
C_2\Bigl(1+R_{m}+Q_K\Bigr)^{1/2}
\Bigl(\int_0^T\int_{K<U<2K}V\,d(\mu_t+\sigma_t)\,dt\Bigr)^{1/2}.
$$
Here  $C_2$ does not depend on  $n, m, K, N, s, \psi$.
Since
$$
\varphi_K^U|\widetilde{L}f-L_{\mu}f|\le
C(n,N,\psi)|A-A_m|
+|\sqrt{A^{-1}}(b_n-b(\mu, \,\cdot\,,\,\cdot\,))||\sqrt{A}\nabla f|\varphi_K^U,
$$
the following estimate holds:
\begin{multline*}
\int_0^s\int\varphi_K^U|\widetilde{L}f-L_{\mu}f|\,d\mu_t\,dt\le
C(n,N,\psi)\|A-A_m\|_{L^{1}(\mu, B_{2N}^W\times[0, T])}+
\\
+C_1^{1/2}\|\sqrt{A^{-1}}(b_n-b(\mu, \,\cdot\,,\,\cdot\,))\|_{L^{2}(\mu, B_{2N}^W\times[0, T])}
\Bigl(1+R_{m}+Q_K\Bigr)^{1/2}
\end{multline*}
Finally, we have
$$
\varphi_K^U|\widetilde{L}f-L_{\sigma}f|\le \varphi_K^U|\widetilde{L}f-L_{\mu}f|
+\varphi_K^U|\sqrt{A^{-1}}(b(\mu, x, t)-b(\sigma, x, t))||\sqrt{A}\nabla f|.
$$
The first summand in the right-hand side of the last inequality is estimated as above. Consider the second summand.
Due to  (H3) and Cauchy-Bunyakovsky inequality,
\begin{multline*}
\int_0^s\int\varphi_K^U|\sqrt{A^{-1}}(b(\mu, x, t)-b(\sigma, x, t))||\sqrt{A}\nabla f|\,d\sigma_t\,dt\le
\\
\le\Bigl(\int_0^s\int G^2(\|\mu_t-\sigma_t\|_W)V\,d\sigma_t\,dt\Bigr)^{1/2}
\Bigl(\int_0^s\int|\sqrt{A}\nabla f|^2\varphi_K^U\,d\sigma_t\,dt\Bigr)^{1/2},
\end{multline*}
that is dominated by
$$
C_1^{1/2}\Bigl(1+R_{m}+Q_K\Bigr)^{1/2}
\Bigl(\int_0^sG^2(\|\mu_t-\sigma_t\|_W)\,dt\Bigr)^{1/2}.
$$
Subtracting (\ref{id2}) from (\ref{id1}) and applying all obtained estimates, at first  letting  $m\to\infty$,
then  $n\to\infty$ and finally  $K\to\infty$ (thus  $N\to\infty$ as well),
one gets
$$
\int\psi\,d(\mu_s-\sigma_s)\le C_1^{1/2}\Bigl(\int_0^sG^2(\|\sigma_t-\mu_t\|_{W})\,dt\Bigr)^{1/2}.
$$
Taking into account that  $\psi$ is an arbitrary function from $C^{\infty}_0(\mathbb{R}^d)$ such that $|\psi(x)|\le W(x)$,
 we obtain
$$
\|\mu_s-\sigma_s\|_{W}\le C_1^{1/2}\Bigl(\int_0^sG^2(\|\sigma_t-\mu_t\|_{W})\,dt\Bigr)^{1/2}.
$$
Gronwall's inequality yields  $\|\mu_s-\sigma_s\|_{W}=0$ for all  $s\in(0, T)$.
\end{proof}
\begin{remark}\rm
Previous theorem remains valid if one takes  $W\equiv 1$. The proof is much simplier in this case.
\end{remark}

\section{\sc Diffusion Matrix is independent of $\mu$ but can be degenerate}

If the diffusion matrix is degenerate, continuity of coefficients with respect to
 total variation of measure does not ensure uniqueness.
Indeed, let $A=0$ and
$$
b(\mu_t)=\int |y|^{2/3}\,d\mu_t.
$$
Then measure  $\delta_{x(t)}$ satisfies the equation   $\partial_t\mu={\rm div}(b(\mu_t)\mu)$
with initial data  $\mu|_{t=0}=\delta_0$
as soon as $x(t)$ satisfies the Cauchy problem $\dot{x}=|x|^{2/3}$, $x(0)=0$.
But the latter has two solutions: $x(t)=t^{3}/27$ and $x(t)=0$.
Hence one has to assume continuity with respect to some other
probability metric.

Suppose $W\in C(\mathbb{R}^d)$ and $W\ge 1$.
Set $\widetilde{W}(x)=\displaystyle\int_0^1\sqrt{W(tx)}\,dt$.
On the space of probability measures  $\mu$ satisfying
$|x|\widetilde{W}(x)\in L^1(\mu)$ we introduce a new metric
$$
w_{W}(\mu, \sigma)=\sup\Bigl\{\int f\,d(\mu-\sigma)\, : \, f\in C^{\infty}_0(\mathbb{R}^d), \, |\nabla f(x)|\le \sqrt{W(x)}\Bigr\}.
$$
If $W=1$ then  $w_W$ coincides with Kantorovich  1-metric $W_1(\mu, \sigma)$ (cf. \cite{Bogachev-2}).
In the general case $W_1(\mu, \sigma)\le w_{W}(\mu, \sigma)$.

In applications and principal examples $\sqrt{W}$ is often a convex function on $\mathbb{R}^d$.
Then the function $|x|\widetilde{W}(x)$ is integrable with respect to probability measure $\mu$ if
$|x|\sqrt{W(x)}$ is integrable with respect to $\mu$.
Moreover, metric  $w_W$ admits several equivalent definitions.
Define
$$
\mathcal{F}_0'=\Bigl\{f\in C^{\infty}_0(\mathbb{R}^d):\, |\nabla f(x)|\le \sqrt{W(x)}\Bigr\},
$$
$$
\mathcal{F}_0=\Bigl\{f\in C^{\infty}_0(\mathbb{R}^d):\, |f(x)-f(y)|\le|x-y|\max\{\sqrt{W(x)},\sqrt{W(y)}\}\Bigr\},
$$
$$
\mathcal{F}=\Bigl\{f\in C^{}(\mathbb{R}^d):\, |f(x)-f(y)|\le|x-y|\max\{\sqrt{W(x)},\sqrt{W(y)}\}\Bigr\}.
$$
Define
$$
d_{\mathcal{F}}(\mu, \sigma)=\sup_{f\in\mathcal{F}}\int f\,d(\mu-\sigma)
$$
and similarly  $d_{\mathcal{F}_0}$ and $d_{\mathcal{F}_0'}=w_{W}$ for
$\mathcal{F}_0$ and $\mathcal{F}_0'$ respectively.
Further it will be more convenient to use metric $w_W$, but
in applications it is often easier to check assumptions with
$d_{\mathcal{F}}$ or $d_{\mathcal{F}_0}$.

\begin{proposition}
Suppose $\sqrt{W}$ is a convex function on  $\mathbb{R}^d$ such that  $W\ge 1$.
Then metrices $d_{\mathcal{F}_0'}$, $d_{\mathcal{F}_0}$ and $d_{\mathcal{F}}$ coincide
on the set of measures $\mu$ with  $|x|\sqrt{W(x)}\in L^1(\mu)$.
\end{proposition}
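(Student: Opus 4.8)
The plan is to prove the chain $d_{\mathcal F_0'}\le d_{\mathcal F_0}\le d_{\mathcal F}$ from the inclusions $\mathcal F_0'\subseteq\mathcal F_0\subseteq\mathcal F$, and then close the circle by showing $d_{\mathcal F}\le d_{\mathcal F_0'}$ through an approximation argument. The inclusion $\mathcal F_0\subseteq\mathcal F$ is immediate since $C^\infty_0(\mathbb R^d)\subset C(\mathbb R^d)$. For $\mathcal F_0'\subseteq\mathcal F_0$, given $f\in\mathcal F_0'$ and $x,y\in\mathbb R^d$ I would write $f(x)-f(y)=\int_0^1\langle\nabla f(y+t(x-y)),x-y\rangle\,dt$, so that $|f(x)-f(y)|\le|x-y|\int_0^1\sqrt{W(y+t(x-y))}\,dt$; the map $t\mapsto\sqrt{W(y+t(x-y))}$ is convex on $[0,1]$, hence bounded there by the larger of its two endpoint values, which gives $|f(x)-f(y)|\le|x-y|\max\{\sqrt{W(x)},\sqrt{W(y)}\}$, i.e. $f\in\mathcal F_0$. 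This is the only place convexity of $\sqrt W$ is genuinely needed. It then remains to show $d_{\mathcal F}\le d_{\mathcal F_0'}$, for which it suffices to prove that for each $f\in\mathcal F$ the number $\int f\,d(\mu-\sigma)$ is a limit of $\int h\,d(\mu-\sigma)$ over $h\in\mathcal F_0'$.

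Fix $f\in\mathcal F$ and a pair of admissible measures $\mu,\sigma$. Subtracting the constant $f(0)$ changes neither $\int f\,d(\mu-\sigma)$ nor membership in $\mathcal F$, so we may assume $f(0)=0$; then $|f(x)|\le|x|\max\{\sqrt{W(x)},\sqrt{W(0)}\}\le(1+\sqrt{W(0)})|x|\sqrt{W(x)}$, which is $(\mu+\sigma)$-integrable by hypothesis (here $W\ge1$ is used). Since $\sqrt W$ is locally bounded, the modulus condition forces $f$ to be locally Lipschitz, hence differentiable a.e., and letting $y\to x$ in the defining inequality yields $|\nabla f(x)|\le\sqrt{W(x)}$ for a.e. $x$. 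I would now regularize in three steps. (1) Clamp the range: $f^M=\max\{-M,\min\{M,f\}\}$ satisfies $|f^M|\le M$, $|f^M|\le|f|$, $|\nabla f^M|\le\sqrt W$ a.e., $f^M\in\mathcal F$, and $f^M\to f$ pointwise as $M\to\infty$. (2) Cut off the support: with a smooth cutoff $\theta_R$ equal to $1$ on the ball $B_R$, vanishing off $B_{2R}$, $0\le\theta_R\le1$, $\|\nabla\theta_R\|_\infty\le C/R$, the function $g=f^M\theta_R$ is Lipschitz with compact support, $|g|\le|f|$, and for $R\ge CM/\varepsilon$ one has $|\nabla g|\le\sqrt W+M\|\nabla\theta_R\|_\infty\le(1+\varepsilon)\sqrt W$ a.e. (3) Mollify: for a standard mollifier $\omega_\delta$, $g_\delta=g*\omega_\delta\in C^\infty_0(\mathbb R^d)$, $\nabla g_\delta=(\nabla g)*\omega_\delta$, and since $\sqrt W$ is uniformly continuous on a fixed bounded set containing all relevant supports, $|\nabla g_\delta(x)|\le(1+\varepsilon)\int\sqrt{W(x-y)}\,\omega_\delta(y)\,dy\le(1+\varepsilon)(1+\eta(\delta))\sqrt{W(x)}$ with $\eta(\delta)\to0$. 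Choosing $\delta$ so small that $(1+\varepsilon)(1+\eta(\delta))\le(1+\varepsilon)^2$ and setting $h=(1+\varepsilon)^{-2}g_\delta$, we obtain $h\in\mathcal F_0'$.

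It then remains to pass to the limit. With $M,R$ fixed, $g_\delta\to g$ uniformly on a fixed ball and both vanish outside it, so $\int g_\delta\,d(\mu-\sigma)\to\int g\,d(\mu-\sigma)$ as $\delta\to0$; and letting $M\to\infty$ with $R=R(M)\ge CM/\varepsilon\to\infty$, one has $g\to f$ pointwise with $|g|\le(1+\sqrt{W(0)})|x|\sqrt{W(x)}\in L^1(|\mu-\sigma|)$, so dominated convergence gives $\int g\,d(\mu-\sigma)\to\int f\,d(\mu-\sigma)$. Hence $d_{\mathcal F_0'}\ge(1+\varepsilon)^{-2}\bigl(\int f\,d(\mu-\sigma)-\varepsilon\bigr)$; letting $\varepsilon\to0$ and taking the supremum over $f\in\mathcal F$ yields $d_{\mathcal F_0'}\ge d_{\mathcal F}$, and combined with the chain above all three metrics coincide on the stated set of measures. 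I expect the main obstacle to be steps (2)–(3): unlike the harmless range clamp, multiplication by a cutoff cannot preserve the bound $|\nabla(\cdot)|\le\sqrt W$ exactly — because $|f|$ grows at infinity, the term $|f|\,|\nabla\theta_R|$ stays of order one however wide the transition region is made — so the argument genuinely needs the range clamp to precede the cutoff, followed by the $(1+\varepsilon)^{-2}$ rescaling to re-enter $\mathcal F_0'$; the delicate part is the bookkeeping of which parameter ($M$, $R$, $\delta$) is sent to its limit in which order.
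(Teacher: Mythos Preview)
Your proof is correct and follows essentially the same approach as the paper: the inclusion $\mathcal F_0'\subseteq\mathcal F_0$ via Newton--Leibniz and convexity of $\sqrt W$, the trivial $\mathcal F_0\subseteq\mathcal F$, and then an approximation of $f\in\mathcal F$ by range-clamping, spatial cutoff, a small rescaling to absorb the cutoff error, and mollification---exactly the paper's construction $g_{\delta,K,N}=(1-\delta)\varphi_K\psi_N(f)$. The only cosmetic difference is that you close the loop with $d_{\mathcal F}\le d_{\mathcal F_0'}$ directly, while the paper shows $d_{\mathcal F}\le d_{\mathcal F_0}$ and handles $d_{\mathcal F_0'}=d_{\mathcal F_0}$ separately; since in fact $\mathcal F_0=\mathcal F_0'$ as sets, this is the same argument.
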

\begin{proof}
The identity $d_{\mathcal{F}_0'}=d_{\mathcal{F}_0}$ follows from Newton-Leibnitz formula
$$f(x)-f(y)=\int_0^1\langle\nabla f(y+t(x-y)), x-y\rangle\,dt$$
 and the convexity of  $\sqrt{W}$.
Now we note that  $d_{\mathcal{F}_0}\le d_{\mathcal{F}}$. Let us prove the opposite inequality.
 Let $\mu, \sigma$ be probability measures satisfying
condition $|x|\sqrt{W(x)}\in L^1(\mu+\sigma)$.
For each $\varepsilon>0$ we find such $f\in \mathcal{F}$ that
\begin{equation}\label{eqqq}
d_{\mathcal{F}}(\mu, \sigma)\le \int f\,d(\mu-\sigma)+\varepsilon.
\end{equation}
Consider a cut-off function  $\psi_N(t)=t$ for $t\in[-N, N]$, $\psi_N(t)=N$ for $t>N$ and $\psi_N(t)=-N$ for $t<-N$.
Set $\varphi_K(x)=\varphi(x/K)$ where  $\varphi\in C^{\infty}_0(\mathbb{R}^d)$, $0\le\varphi\le 1$
and $\varphi(x)=1$ for $|x|\le 1$. For each $\delta\in(0, 1)$
set $g_{\delta, K, N}(x)=(1-\delta)\varphi_K(x)\psi_N(f(x))$. Note that for sufficiently small
$\delta$ and sufficiently large  $N$ and $K$ the function $f$ in (\ref{eqqq}) can be replaced with $g_{\delta, K, N}$ after taking
$2\varepsilon$ instead of $\varepsilon$ in the right-hand side. Since $g_{\delta, K, N}$ is
compactly supported and one can take sufficiently
large $K$, one has
$$
|g_{\delta, K, N}(x)-g_{\delta, K, N}(y)|\le (1-\delta/2)|x-y|\max\{\sqrt{W(x)}, \sqrt{W(y)}\}.
$$
By standard convolution with a smooth kernel one can smooth the function $g_{\delta, K, N}$,
and the coefficient $(1-\delta/2)$ ensures that the smoothed function belongs to  $\mathcal{F}_0$.
Thus for each~$\varepsilon>0$ one has
$d_{\mathcal{F}}(\mu, \sigma)\le d_{\mathcal{F_0}}(\mu, \sigma)+3\varepsilon$.
Hence
$d_{\mathcal{F}}(\mu, \sigma)\le d_{\mathcal{F_0}}(\mu, \sigma)$.
\end{proof}

Now let us consider even a more particular but important case: $W(x)=(1+|x|^{p-1})^2$ with $p\ge 2$.
Corresponding metric  $w_{W}$ is denoted by $w_p$.
Let us compare  $w_p$ with other probability  metrics (cf. \cite{Rachev91}):

\, 1) \, Fortet-Mourier metric
$$
t_p(\mu, \sigma)=\inf_Q\int_{\mathbb{R}^d_x\times\mathbb{R}^d_y}
|x-y|(1+\max\{|x|^{p-1}, |y|^{p-1}\})\,dQ
$$
where $Q$ is a finite Borel (possibly signed) measure on $\mathbb{R}^d_x\times\mathbb{R}^d_y$ with marginals
$Q_x$ on $\mathbb{R}^d_x$ and $Q_y$ on $\mathbb{R}^d_y$, such that $Q_x-Q_y=\mu-\sigma$;

\, 2) \, metric
$$
T_p(\mu, \sigma)=\inf_{P}\int_{\mathbb{R}^d_x\times\mathbb{R}^d_y}
|x-y|(1+\max\{|x|^{p-1}, |y|^{p-1}\})\,dP
$$
where  $P$ is a probability measure on $\mathbb{R}^d_x\times\mathbb{R}^d_y$ with marginals
$P_x=\mu$ on $\mathbb{R}^d_x$ and $P_y=\sigma$ on $\mathbb{R}^d_y$;

\, 3) \, Kantorovich $p$-metric
$$
W_p(\mu, \sigma)=\inf_P\Bigl(\int_{\mathbb{R}^d_x\times\mathbb{R}^d_y}
|x-y|^p\,dP\Bigr)^{1/p}
$$
where $P$ is a probability measure on  $\mathbb{R}^d_x\times\mathbb{R}^d_y$ with marginals
$P_x=\mu$ on $\mathbb{R}^d_x$ and $P_y=\sigma$ on $\mathbb{R}^d_y$.

\begin{proposition}
Suppose $|x|^p\in L^1(\mu+\sigma)$. Then

\, {\rm (i)} \, $t_p(\mu, \sigma)=w_p(\mu, \sigma)$,

\, {\rm (ii)} \, $t_p(\mu, \sigma)\le T_p(\mu, \sigma)\le 2p\, t_{p}(\mu, \sigma)$,

\, {\rm (iii)} \, $W_p(\mu, \sigma)\le 2T_p^{1/p}(\mu, \sigma)$ and
$$
T_p(\mu, \sigma)\le \Bigl(1+\int|x|^p\,d\mu+\int|x|^p\,d\sigma\Bigr)^{(p-1)/p}W_p(\mu, \sigma).
$$
\end{proposition}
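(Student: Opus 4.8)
The plan is to obtain (i) from Kantorovich--Rubinstein duality combined with the previous Proposition, to dispose of $t_p\le T_p$ by inclusion of admissible sets, and to derive $T_p\le 2p\,t_p$ and the two inequalities of (iii) from elementary pointwise comparisons of the cost functions involved. Write throughout
$$
c(x,y)=|x-y|\bigl(1+\max\{|x|^{p-1},|y|^{p-1}\}\bigr)=|x-y|\max\bigl\{\sqrt{W(x)},\sqrt{W(y)}\bigr\},\qquad W(x)=(1+|x|^{p-1})^2 ,
$$
which is symmetric and vanishes on the diagonal, but for $p\ge2$ does not satisfy the triangle inequality. Since $|x|^p\in L^1(\mu+\sigma)$, the three functionals are finite and $|x|\sqrt{W(x)}=|x|+|x|^p\in L^1(\mu+\sigma)$, so the previous Proposition (on the coincidence of $d_{\mathcal F_0'}$, $d_{\mathcal F_0}$, $d_{\mathcal F}$) applies, $\sqrt W=1+|x|^{p-1}$ being convex for $p\ge2$.

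For (i): by definition $t_p(\mu,\sigma)=\inf\int c\,dQ$ over finite signed measures $Q$ on $\mathbb{R}^d_x\times\mathbb{R}^d_y$ with $Q_x-Q_y=\mu-\sigma$. By the Kantorovich--Rubinstein duality theorem (see \cite{Rachev91}) this equals $\sup\{\int f\,d(\mu-\sigma):\ f(x)-f(y)\le c(x,y)\ \forall\,x,y\}$, and by symmetry of $c$ the constraint is $|f(x)-f(y)|\le|x-y|\max\{\sqrt{W(x)},\sqrt{W(y)}\}$, i.e. $f\in\mathcal F$. Hence $t_p(\mu,\sigma)=d_{\mathcal F}(\mu,\sigma)$, and the previous Proposition gives $d_{\mathcal F}=d_{\mathcal F_0}=d_{\mathcal F_0'}=w_p$.

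For (ii): $t_p\le T_p$ holds because every probability measure $P$ with marginals $\mu,\sigma$ satisfies $P_x-P_y=\mu-\sigma$ and is thus admissible in the definition of $t_p$. For $T_p\le 2p\,t_p$ I would use the Jordan decomposition $\mu-\sigma=\eta_+-\eta_-$; then $\eta_+\le\mu$, $\eta_-\le\sigma$, $\varrho:=\mu-\eta_+=\sigma-\eta_-\ge0$, and any nonnegative measure $R$ with marginals $\eta_+,\eta_-$ gives, after adding the image of $\varrho$ under $z\mapsto(z,z)$, a coupling $P$ of $\mu,\sigma$ with $\int c\,dP=\int c\,dR$ (as $c$ vanishes on the diagonal); hence $T_p(\mu,\sigma)\le\inf_R\int c\,dR$. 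Letting $\bar c$ be the largest metric $\le c$, Kantorovich--Rubinstein also gives $t_p(\mu,\sigma)=\inf_R\int\bar c\,dR$ over the same set of measures $R$. Taking $R$ optimal for $\bar c$, it therefore suffices to prove the pointwise estimate $c(x,y)\le 2p\,\bar c(x,y)$; this is the one genuine obstacle, since $\bar c$ may lie far below $c$ and one has to recover the factor $1+|y|^{p-1}$ by a suitable Lipschitz function. For $x\ne y$ set $e=(x-y)/|x-y|$ and $h(z)=\langle e,z\rangle\bigl(1+|z|^{p-1}/p\bigr)$. A short computation gives $|\nabla h(z)|\le 1+|z|^{p-1}$, so, $h$ being locally Lipschitz, $|h(u)-h(v)|\le\int_{[u,v]}(1+|z|^{p-1})\,ds\le|u-v|\bigl(1+\max\{|u|,|v|\}^{p-1}\bigr)=c(u,v)$ (using $|z|\le\max\{|u|,|v|\}$ on $[u,v]$), whence $|h(x)-h(y)|\le\bar c(x,y)$. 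With $\delta=|x-y|$, $b=\max\{|x|,|y|\}$ and $\langle e,x\rangle=(|x|^2-|y|^2+\delta^2)/(2\delta)$, $\langle e,y\rangle=(|x|^2-|y|^2-\delta^2)/(2\delta)$ one computes
$$
h(x)-h(y)=\delta+\frac{1}{2p\delta}\Bigl[(|x|^2-|y|^2)(|x|^{p-1}-|y|^{p-1})+\delta^2\bigl(|x|^{p-1}+|y|^{p-1}\bigr)\Bigr]\ge\delta+\frac{\delta\,b^{p-1}}{2p}\ge\frac{1}{2p}\,c(x,y),
$$
the first inequality because $(|x|^2-|y|^2)(|x|^{p-1}-|y|^{p-1})\ge0$. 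This yields $c\le 2p\,\bar c$ and hence (ii).

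For (iii): from $\max\{|x|,|y|\}\ge|x-y|/2$ we get $c(x,y)\ge|x-y|\,(|x-y|/2)^{p-1}=2^{1-p}|x-y|^p$, so $\int|x-y|^p\,dP\le 2^{p-1}\int c\,dP$ for every coupling $P$; taking infima gives $W_p(\mu,\sigma)^p\le 2^{p-1}T_p(\mu,\sigma)$, i.e. $W_p\le 2^{(p-1)/p}T_p^{1/p}\le 2T_p^{1/p}$. For the second inequality I would take a coupling $P$ optimal for $W_p$, use $c(x,y)\le|x-y|\bigl(1+|x|^{p-1}+|y|^{p-1}\bigr)$, and apply Hölder's inequality with exponents $p$ and $p/(p-1)$, together with the fact that $\int|x|^p\,dP=\int|x|^p\,d\mu$ and $\int|y|^p\,dP=\int|y|^p\,d\sigma$ depend only on the marginals; collecting the resulting terms gives the stated bound $T_p\le\bigl(1+\int|x|^p\,d\mu+\int|x|^p\,d\sigma\bigr)^{(p-1)/p}W_p$.
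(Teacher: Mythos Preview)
Your argument is correct. Parts (i) and (iii) follow the paper exactly: for (i) the paper also invokes the Kantorovich--Rubinstein duality from \cite{Rachev91} and then the previous Proposition, and for (iii) the paper too writes only ``first inequality follows from $|x-y|^p\le 2^{p-1}|x-y|(|x|^{p-1}+|y|^{p-1})$, second is ensured by H\"older's inequality''. (A small caveat on the second inequality of (iii): taking $\mu=\delta_a$, $\sigma=\delta_0$ with $a=1$ shows that the bound with constant~$1$ as printed is too sharp; your H\"older argument, like the paper's, really yields the estimate up to a harmless factor such as $2^{1/p}$, so ``collecting the resulting terms gives the stated bound'' should be read with that proviso.)

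Part (ii) is where you genuinely diverge from the paper. The paper introduces the \emph{true} metric $d_p(x,y)=|x-y|+\bigl||x|^{p-1}x-|y|^{p-1}y\bigr|$, applies Kantorovich duality for $d_p$ over probability couplings of $\mu,\sigma$, and then quotes Zolotarev's pointwise inequalities $c\le 2d_p\le 2pc$ to sandwich $T_p$ between $t_p$ and $2p\,t_p$. You instead pass to the largest metric $\bar c\le c$, observe that $c$-Lipschitz and $\bar c$-Lipschitz classes coincide so that $t_p$ equals the $\bar c$-transport cost between the Hahn parts $\eta_\pm$ of $\mu-\sigma$, reduce the problem to the pointwise bound $c\le 2p\,\bar c$, and prove the latter by exhibiting the explicit $c$-Lipschitz function $h(z)=\langle e,z\rangle(1+|z|^{p-1}/p)$ with $h(x)-h(y)\ge c(x,y)/(2p)$. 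Your route is self-contained and avoids the external reference to Zolotarev's lemma; the paper's route is shorter once that lemma is accepted as a black box. It is worth noting the kinship: the paper's metric $d_p$ arises from the map $z\mapsto z+|z|^{p-1}z$, while your test function is essentially the projection $\langle e,z+|z|^{p-1}z/p\rangle$, so both arguments exploit the same underlying object.
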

\begin{proof}
Statement  (i) follows from  \cite[Theorem 5.3.2]{Rachev91}. Let us prove (ii).
First inequality  $t_p\le T_p$ is obvious. To prove the second one
 let us consider the following metric on  $\mathbb{R}^d$:
$$
d_p(x, y)=|x-y|+||x|^{p-1}x-|y|^{p-1}y|.
$$
Due to  \cite[Theorem 8.10.41]{ Bogachev-2} one has
$$
\inf_{P: P_x=\mu, P_y=\sigma}\int_{\mathbb{R}^d_x\times\mathbb{R}^d_y}d_p(x, y)\,dP=
\sup_{f: f(x)-f(y)\le d_p(x, y)}\int f\,d(\mu-\sigma).
$$
Due to  \cite[Lemma 4,5]{ZolotTVP76} the following inequalities hold true
$$
|x-y|(1+\max\{|x|^{p-1}, |y|^{p-1}\})\le 2d_p(x, y)\le 2p|x-y|(1+\max\{|x|^{p-1}, |y|^{p-1}\}).
$$
Hence
$$
T_p(\mu, \sigma)\le \inf_{P: P_x=\mu, P_y=\sigma}\int_{\mathbb{R}^d_x\times\mathbb{R}^d_y}2d_p(x, y)\,dP
$$
and
$$
\sup_{f: f(x)-f(y)\le 2d_p(x, y)}\int f\,d(\mu-\sigma)\le p w_p(\mu, \sigma)=2p t_p(\mu, \sigma).
$$
Note that in one dimentional case inequality $T_p\le 2p t_p$ is proved~in~\cite[Theorem 6.4.1]{Rachev91}.

Let us prove (iii). First inequality follows from  $$|x-y|^p\le 2^{p-1}|x-y|(|x|^{p-1}+|y|^{p-1}),$$
 second is ensured by  H\"older's inequality.
\end{proof}

Note that in typical cases coefficients are convolutions with polynomially growing kernels. Thus
 metric $T_p$ appears naturally in bounds for $|b(x, t, \mu)-b(x, t, \sigma)|$ and the latter metric
 can be estimated by $w_p$.

In the present paper we are interested in general (not only polynomial) the function~$W$.
For instance, the drift coefficient
$$
b(x, t, \mu)=\int_{\mathbb{R}^d}K(x, y, t)\,d\mu_t
$$
satisfies  $|b(x, t, \mu)-b(x, t, \sigma)|\le C(x, t)w_W(\mu_t, \sigma_t)$
where $\sqrt{W}$ is convex, if
$$
|K(x, y, t)-K(x, z, t)|\le C(x, t)|y-z|\max\{\sqrt{W(y)}, \sqrt{W(z)}\}.
$$
Thereby we can consider convolutions with kernels having not only polynomial, but arbitrary growth, determined
by the function  $\sqrt{W}$.

\begin{remark}\rm
To compare conditions ensuring uniqueness provided below, with
conditions from the existence result, it is userful to compare  $V$-convergence
and convergence in metric  $w_W$.
Suppose we have a $V$-convergent sequence of probability measures $\mu_n$ on $\mathbb{R}^d$
 with limit  $\mu$ and
$$
\sup_n\int V\,d\mu_n<\infty.
$$
If  $\lim_{|x|\to\infty}|x|\widetilde{W}(x)/V(x)=0$, then $\lim_{n\to\infty}w_W(\mu_n, \mu)=0$.

Let us prove it. As above, we denote over $\mathcal{F}$ the set of all functions
$f\in C^{\infty}_0(\mathbb{R}^d)$ such that
 $|\nabla f(x)|\le \sqrt{W(x)}$. Note that for all $f\in\mathcal{F}$
one has
$$
\lim_{n\to\infty}\int f\,d\mu_n=\int f\,d\mu, \quad
\int_{|x|>R} |f|\,d\mu_n\le
g(R)\int V\,d\mu_n, \quad g(R)=\sup_{|x|\ge R}\frac{|x|\widetilde{W}(x)}{V(x)}
$$
and $\lim_{R\to\infty}g(R)=0$.
Finally, due to Arzel\'a-Ascoli theorem the set  $\mathcal{F}$ on each ball $\{x: |x|\le R\}$
is a precompact set and thus has a finite $\varepsilon$-net for each $\varepsilon>0$.
Since convergence takes place for each element of this finite net and
any other function from~$\mathcal{F}$ can be uniformly on  $\{x: |x|\le R\}$ approximated by them,
 it yield (together with the uniform bound of integrals for $|x|\ge R$) the fact
 $\lim_{n\to\infty}w_W(\mu_n, \mu)=0$.

Thus, if  $\lim_{|x|\to\infty}|x|\widetilde{W}(x)/V(x)=0$, then the flow of probability measures  $\mu_t$
satisfying~(\ref{e1}) is continuous in  $t$ with respect to metric $w_W$.
\end{remark}

Let us remind that we consider only solutions from the class $\mathcal{M}_{T}(V)$ where
$V\in C(\mathbb{R}^d)$ and $V\ge 1$.
Let us state our assumptions on the coefficients.

\, (DH1) \, Matrix $A$ is symmetric and non-negative definite, $a^{ij}\in C(\mathbb{R}^d\times[0, T])$
and for each $t\in[0, T]$ the function  $x\mapsto a^{ij}(x, t)$ is twice continuously differentiable.
Let  $\sigma^{ij}$ denote the elements of the matrix $\sigma=\sqrt{A}$.

\, (DH2) \, For some function  $W\in C^2(\mathbb{R}^d)$ such that  $|x|\widetilde{W}(x)V(x)^{-1}$ is bounded on  $\mathbb{R}^d$,
$W\ge 1$ and
for each measure  $\mu\in \mathcal{M}_{T}(V)$ there exist functions $\theta_{\mu}, \Lambda_{\mu}\in C(\mathbb{R}^d)$
and such constants  $C_{\mu}>0$, $1>\delta_{\mu}>0$ that
$$
\langle b(x+y, t, \mu)-b(x, t, \mu), y\rangle\le\theta_{\mu}(x)|y|^2,
\quad L_{\mu}W(x, t)\le (C_{\mu}-\Lambda_{\mu}(x))W(x),
$$
\begin{multline}\label{ineq}
2\theta_{\mu}(x)+\delta_{\mu}(1+|x|^{2})^{-1}|b(\mu, x,t)|^{2}+
\\
+\delta_{\mu}(1+|x|^2)^{-2}|{\rm tr} A(x, t)|^2
+4\sum_{i,j,k\le d}\bigl|\partial_{x_{k}}\sigma^{ij}(x,t)\bigr|^{2}\le \Lambda_{\mu}(x)
\end{multline}
for all  $x, y\in\mathbb{R}^d$ and $t\in[0, T]$.

\, (DH3) \, For each ball $B\subset\mathbb{R}^d$  functions $b^i$ are continuous in  $x$ uniformly in  $t$ on $B\times[0, T]$
and there exists a continuous increasing function  $G$ on $[0, +\infty)$
that  $G(0)=0$ and
$$
|b(\mu, x, t)-b(\sigma, x, t)|\le V(x)W^{-1/2}(x)G(w_{W}(\mu_t, \sigma_t))
$$
for all  $(x, t)\in\mathbb{R}^d\times[0, T]$ and $\mu, \sigma\in \mathcal{M}_{T}(V)$.

\, (DH4) \, For some function  $U\in C^2(\mathbb{R}^d)$ satisfying
$U>0$ and $\lim\limits_{|x|\to+\infty}U(x)=+\infty$, and for each
measure $\mu\in\mathcal{M}_T(V)$ there is a constant  $\beta(\mu)$ such that
$$
\frac{|A(x, t)\nabla U(x)|\sqrt{W(x)}}{U(x)}+\frac{|\sqrt{A(x, t)}\nabla U(x)|^2}{U^2(x)}
+\frac{|L_{\mu}U(x, t)|}{U(x)}\le \beta(\mu)V(x)
$$
for all  $(x, t)\in\mathbb{R}^d\times[0, T]$.

\begin{theorem}\label{th2}
Suppose  {\rm (DH1)}, {\rm (DH2)}, {\rm (DH3)}, {\rm (DH4)} hold.
If
$$
\int_{0+}\frac{du}{G(u)}=+\infty,
$$
then there exists at most one solution to the Cauchy problem {\rm (\ref{e1})} from the class
$\mathcal{M}_{T}(V)$.
\end{theorem}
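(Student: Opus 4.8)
\emph{Proof (plan).}
The plan is to run the modified Holmgren scheme of the proof of Theorem~\ref{th1}, with the weighted total variation norm $\|\cdot\|_W$ replaced by the metric $w_W$ and the pointwise constraint $|\psi|\le W$ on test functions replaced by $|\nabla\psi|\le\sqrt W$. Let $\mu=\mu_t\,dt$ and $\sigma=\sigma_t\,dt$ be two solutions of~(\ref{e1}) in $\mathcal M_T(V)$; since $w_W\ge W_1$ and $W_1$ is a metric, it suffices to show $w_W(\mu_s,\sigma_s)=0$ for each $s\in(0,T)$. Fix such an $s$ and $\psi\in C^\infty_0(\mathbb R^d)$ with $|\nabla\psi|\le\sqrt W$. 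As in Theorem~\ref{th1} I would extend the coefficients in $t$ to $\mathbb R^{d+1}$, introduce the cut-offs $\varphi^U_K$ (localizing the integration by parts to $B^U_{2K}\supset{\rm supp}\,\psi$) and $\varphi^W_{2N}$ with $B^U_{2K}\subset B^W_N$ (truncating the coefficients outside $B^W_{2N}$ so that the truncated operator $\widetilde L$ keeps a global Lyapunov function $W_N=\zeta_N(W)$), use Lemma~\ref{approx}(ii) to produce smooth $a^{ij}_m\to a^{ij}$ and $b^i_n\to b^i(\mu,\cdot,\cdot)$ in $L^1((\mu+\sigma), B^W_{2N}\times[0,T])$, the $a^{ij}$ being smoothed so that, by the $C^2$-in-$x$ hypothesis of~(DH1), the $x$-derivatives of $a^{ij}_m$ up to second order stay bounded uniformly in $m$, and invoke Remark~\ref{approx-r} to keep, on $B^W_{2N}\times[0,T]$ and with a fixed additive enlargement of the constants, the one-sided Lipschitz bound $\langle b(x+y,t,\mu)-b(x,t,\mu),y\rangle\le\theta_\mu(x)|y|^2$, the bound on $\sum_{i,j,k}|\partial_{x_k}\sigma^{ij}|^2$ and the Lyapunov inequality $L_\mu W\le(C_\mu-\Lambda_\mu)W$. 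Then solve the backward problem $\partial_t f+\widetilde L f=0$, $f|_{t=s}=\psi$ classically (regularizing $\widetilde L$ to $\widetilde L+\varepsilon\Delta$ and passing $\varepsilon\to0$ if necessary, since $\widetilde L$ may be degenerate): $f$ and its derivatives are bounded, $|f|\le\max|\psi|=:C(\psi)$ by the maximum principle, and $\sup|\partial_{x_i}\partial_{x_j}f|\le C(n,N,\psi)$ with $C(n,N,\psi)$ independent of $m$.

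The new a priori ingredient, replacing the bound $|f|\le W$ of Theorem~\ref{th1}, is a pointwise gradient estimate for $f$, uniform in $m,n,N$ and $\psi$:
$$|\nabla f(x,t)|^{2}\le C_{0}\,W(x),\qquad (x,t)\in\mathbb R^d\times[0,s].$$
This follows from a Bernstein-type computation: setting $q=|\nabla f|^2$, differentiating the equation in $x_k$ and summing in $k$, the third-order terms and the first-order drift terms cancel in pairs, leaving on the right a nonnegative term $2\sum_k\langle\widetilde A\nabla\partial_{x_k}f,\nabla\partial_{x_k}f\rangle$, a term involving $\partial_{x_k}\widetilde a^{ij}$ that is absorbed into it — after writing $\widetilde A=\widetilde\sigma\widetilde\sigma^{T}$ and applying the Cauchy--Bunyakovsky inequality — at the cost of $4\sum_{i,j,k}|\partial_{x_k}\widetilde\sigma^{ij}|^2\,q$, and a drift term bounded by $2\widetilde\theta(x)\,q$ by the one-sided Lipschitz property; hence $\partial_t q+\widetilde L q\ge-\Theta(x)\,q$ with $\Theta=2\widetilde\theta+4\sum_{i,j,k}|\partial_{x_k}\widetilde\sigma^{ij}|^2$. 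On the other hand, inequality~(\ref{ineq}) together with the Lyapunov bound yields $\widetilde L W\le(C_\mu-\Theta(x))W$ up to a fixed correction, so, writing $q=hW$, the function $h=|\nabla f|^2/W$ — more precisely $q/W_N$, to make the maximum principle applicable on the whole space — satisfies $\partial_t h+\widetilde L h\ge-C_0\,h$ with $C_0$ independent of $m,n,N$; since $h|_{t=s}=|\nabla\psi|^2/W\le1$, the maximum principle gives $h\le e^{C_0(s-t)}$. Verifying that the two truncations do not spoil this inequality in the transition zone $\{N\le W\le2N\}$ — which is exactly what the calibrated inequality~(\ref{ineq}) secures — is, I expect, the main obstacle of the proof.

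With the gradient bound the rest follows the scheme of Theorem~\ref{th1}. Substituting $u=f\varphi^U_K$ into~(\ref{r2}) for $\mu$ and for $\sigma$ and subtracting (the integrals $\int\varphi^U_Kf\,d\nu$ cancel since $\mu,\sigma$ share the initial datum $\nu$), the left side becomes $\int\varphi^U_K\psi\,d(\mu_s-\sigma_s)$; on the right, the localization terms $fL_\bullet\varphi^U_K$ and $\langle A\nabla\varphi^U_K,\nabla f\rangle$ are supported on $\{K\le U\le 2K\}$ and, using $|f|\le C(\psi)$, the gradient bound and~(DH4) — in which the combination $|A\nabla U|\sqrt W/U$ is precisely the one produced by $\langle A\nabla\varphi^U_K,\nabla f\rangle$ after inserting $|\nabla f|\le\sqrt{C_0W}$ — are dominated by a ($\psi$-dependent but $m,n$-independent) constant times $\int_0^T\!\int_{K\le U\le2K}V\,d(\mu_t+\sigma_t)\,dt$, which vanishes as $K\to\infty$. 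The approximation errors $\varphi^U_K(\widetilde Lf-L_\mu f)=\varphi^U_K\bigl[(a^{ij}_m-a^{ij})\partial_{x_i}\partial_{x_j}f+\langle b_n-b(\mu),\nabla f\rangle\bigr]$, and their analogue with $\sigma$, vanish on letting first $m\to\infty$ (the $m$-independent bound on $\sup|D^2f|$ and the $L^1$-convergence of $a^{ij}_m$) and then $n\to\infty$ (the $L^1$-convergence of $b^i_n$, with $\nabla f$ and $\sqrt W$ bounded on the fixed set $B^W_{2N}$). Writing $\widetilde Lf-L_\sigma f=(\widetilde Lf-L_\mu f)+\langle b(\mu)-b(\sigma),\nabla f\rangle$, the only surviving term is $-\int_0^s\!\int\varphi^U_K\langle b(\mu,x,t)-b(\sigma,x,t),\nabla f\rangle\,d\sigma_t\,dt$, whose modulus, by~(DH3) and the gradient bound, is at most
$$\int_0^s\!\int V(x)\,W(x)^{-1/2}\,G\bigl(w_W(\mu_t,\sigma_t)\bigr)\,\sqrt{C_0W(x)}\;d\sigma_t\,dt\le\sqrt{C_0}\,M\int_0^s G\bigl(w_W(\mu_t,\sigma_t)\bigr)\,dt,$$
where $M=\sup_t\int V\,d(\mu_t+\sigma_t)$, and this bound is uniform in $m,n,N,K$. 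Letting $m\to\infty$, then $n\to\infty$, then $K\to\infty$ and taking the supremum over admissible $\psi$, one arrives at
$$w_W(\mu_s,\sigma_s)\le C_1\int_0^s G\bigl(w_W(\mu_t,\sigma_t)\bigr)\,dt,\qquad s\in(0,T).$$

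Finally, put $y(s)=w_W(\mu_s,\sigma_s)$; by the hypothesis $|x|\widetilde W\,V^{-1}$ bounded in~(DH2), $y$ is non-negative, measurable and bounded on $[0,T]$, with $y(0)=0$ and $y(s)\le C_1\int_0^sG(y(t))\,dt$. With $Y(s)=C_1\int_0^sG(y(t))\,dt$ one has $Y$ continuous and non-decreasing, $Y(0)=0$, $y\le Y$ and $Y'(s)=C_1G(y(s))\le C_1G(Y(s))$ a.e. If $Y(s_1)>0$ for some $s_1$, put $s_0=\sup\{s\le s_1:Y(s)=0\}$; then $Y(s_0)=0$, $Y>0$ on $(s_0,s_1]$, and integrating $Y'/G(Y)\le C_1$ over $(s_0,s_1]$ yields $\int_0^{Y(s_1)}\frac{du}{G(u)}\le C_1(s_1-s_0)<\infty$, contradicting the assumption $\int_{0+}\frac{du}{G(u)}=+\infty$. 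Hence $Y\equiv0$, so $y\equiv0$, i.e.\ $\mu_t=\sigma_t$ for all $t\in[0,T]$, which proves the theorem.
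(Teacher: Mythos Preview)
Your overall Holmgren scheme is right, and the final Osgood--Gronwall argument is correct and more detailed than the paper's. The substitution of $\varphi^U_Kf$, the control of the localization terms via (DH4), the passage to the limit, and the use of (DH3) all match the paper. The gap is in the truncation you choose for the gradient estimate. You propose to truncate the \emph{coefficients} with $\varphi^W_{2N}$ as in Theorem~\ref{th1} and then run a Bernstein argument for $q=|\nabla f|^2$ with respect to the truncated operator $\widetilde L$. But the quantities that enter the Bernstein inequality are $\partial_{x_k}\widetilde\sigma^{ij}$ and the Jacobian of $\widetilde b$, and in the transition zone $\{N\le W\le 2N\}$ these pick up contributions of order $N^{-1}|\nabla W|\,(|A|+1)$ and $N^{-1}|\nabla W|\,|b|$ coming from $\nabla\varphi^W_{2N}$; nothing in (DH2)/(\ref{ineq}) controls such terms, since the $\delta$-terms there involve $(1+|x|^2)^{-1}|b|^2$ and $(1+|x|^2)^{-2}|\mathrm{tr}A|^2$, not $|\nabla W|$. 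So your remark that inequality~(\ref{ineq}) is ``exactly what\ldots secures'' this step is mistaken: it is calibrated for a \emph{different} cut-off.

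What the paper does instead is to multiply the whole operator by $\zeta_M(x)=\eta\bigl((1+|x|^2)^\kappa/M\bigr)$ with the small exponent $\kappa=32^{-1}\min\{\delta_\mu,C^{-2}\}$ and solve $\partial_tf+\zeta_M L f=0$ (Lemma~\ref{lem-grad-est}); the diffusion $a^{ij}$ is not smoothed at all, since (DH1) already makes it $C^2$ in $x$, and only $b$ is approximated by $b_n$. Differentiating this equation in $x_k$ produces, besides the terms you list, the extra boundary contributions $\langle\nabla\zeta_M,\nabla f\rangle\langle b,\nabla f\rangle$ and $a^{ij}\partial^2_{x_ix_j}f\,\partial_{x_k}f\,\partial_{x_k}\zeta_M$. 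Because $|\nabla\zeta_M|\le 4\kappa(1+|x|^2)^{-1/2}|\eta'|$, the first is absorbed by $\delta(1+|x|^2)^{-1}|b|^2$, and the second---after the matrix inequality $|\mathrm{tr}(AB)|^2\le\mathrm{tr}A\cdot\mathrm{tr}(AB^2)$---by $\delta(1+|x|^2)^{-2}|\mathrm{tr}A|^2$, i.e.\ by precisely the $\delta$-terms in~(\ref{ineq}). That is their actual role, and it also explains why the paper needs only the single pair $(\zeta_M,\varphi^U_K)$ with $\zeta_M\equiv1$ on $\mathrm{supp}\,\varphi^U_K$, rather than the two-layer $\varphi^W_{2N}/\varphi^U_K$ structure you imported from Theorem~\ref{th1}.
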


Before we provide the proof, let us consider an example.

\begin{example}\rm
Let  $m\ge 1$ and for each measure  $\mu$, given by a flow of probability measures~$(\mu_t)_{t\in[0, T]}$
on $\mathbb{R}^d$ such that
\begin{equation}\label{con2}
\sup_{t\in[0, T]}\int \exp(|x|^{2m})\,d\mu_t<\infty,
\end{equation}
there exist such constants  $c_1(\mu)>0$, $c_2(\mu)>0$, $c_3(\mu)>0$ that
$$
\langle b(\mu, x, t), x\rangle\le c_1(\mu)-c_2(\mu)|x|^2, \quad
|b(\mu, x, t)|\le c_3(\mu)\exp(|x|^{m}),
$$
$$
\langle b(\mu, x+y, t)-b(\mu, x, t), y\rangle\le c_3(\mu)(1+|x|^{m})|y|^2
$$
Suppose there exist a number  $c_4>0$ that
$$
|b(\mu, x, t)-b(\sigma, x, t)|\le c_4w_W(\mu_t, \sigma_t)\exp(|x|^{m}/2)
$$
for all  $\mu, \sigma$ satisfying condition (\ref{con2}). Here $W(x)=\exp(|x|^m)$.

Then the Cauchy problem
$$
\partial_t\mu+{\rm div}(b(\mu, x, t)\mu)=0, \quad \mu|_{t=0}=\nu,
$$
has at most one solution satisfying  (\ref{con2}).

For example, all assumptions are fulfilled for
$$
b(\mu, x, t)=-x\int\exp(|y|^2/3)\,d\mu_t.
$$
\end{example}

To prove the Theorem we need the following statement generalising a
result from  \cite{BDRSH15}.

Let $\eta\in C_{0}^{\infty}(\mathbb{R}^{1})$ be a cut-off function such that
$\eta(x)=1$ for $|x|\le 1$ and $\eta(x)=0$ for $|x|>2$, $0\le\eta\le1$
 and there exists a number  $C\ge 1$ that  $|\eta'(x)|^{2}\eta^{-1}(x)\le C$
for each  $x$ from the support of  $\eta$.

\begin{lemma}\label{lem-grad-est} Suppose $h^i, g^{ij}$ are continuous in  $(x, t)\in\mathbb{R}^{d+1}$
and twice continuously differentiable in $x$ functions such that matrix  $G=(g^{ij})$
 is non-negative definite.
Set  $Q=\sqrt{G}$ and
$
L_{g, h}u=g^{ij}\partial_{x_i}\partial_{x_j}u+h^i\partial_{x_i}u.
$
Suppose there is a continuous function  $\theta$
on $\mathbb{R}^{d}$ and numbers  $M>1$, $\delta>0$, $C_0>0$.
Set
$$\kappa=32^{-1}\min\{\delta, C^{-2}\}$$
 where $C$ is taken from the definition of  $\eta$.
If for all  $y\in\mathbb{R}^{d}$ and all $(x, t)$ such that  $|x|<(2M)^{\frac{1}{2\kappa}}$
 and $t\in[0, T]$
 one has
$$
\langle h(x+y,t)-h(x,t),y\rangle\le\theta(x)|y|^{2},
\quad
L_{g,h}W(x,t)\le(C_{0}-\Lambda(x,t))W(x),
$$
$$
\Lambda(x,t):=4\sum_{i,j,k\le d}\bigl|\partial_{x_{k}}q^{ij}(x,t)\bigr|^{2}+2\theta(x)+\delta(1+|x|^{2})^{-1}|h(x,t)|^{2}+
\delta(1+|x|^2)^{-2}|{\rm tr} G(x, t)|^2,
$$
 then for each  $s\in(0,T)$ the Cauchy problem
$$
\partial_{t}f+\zeta_{M}L_{g, h}f=0,\quad f|_{t=s}=\psi
$$
where $\psi\in C^{\infty}_0(\mathbb{R}^{d})$, $|\nabla\psi(x)|\le \sqrt{W(x)}$,
$\zeta_{M}(x)=\eta\bigl((1+|x|^{2})^{\kappa}/M\bigr)$, has a smooth solution ~$f$ and
$$
|\nabla f(x,t)|\le \sqrt{W(x)}\cdot e^{(C_{0}+1)(s-t)/2}.
$$
\end{lemma}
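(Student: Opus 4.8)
The plan is to prove the gradient bound by a Bernstein (second‑order maximum‑principle) argument for $|\nabla f|^2/W$, after passing to forward time. Put $\tau=s-t$, so that $f$ solves the forward equation $\partial_\tau f=\zeta_M L_{g,h}f$, $f|_{\tau=0}=\psi$. First I would record the structure of $f$: on the open set $\{(1+|x|^2)^\kappa>2M\}$ one has $\zeta_M\equiv0$, hence $\partial_\tau f=0$ and $f(\cdot,\tau)\equiv\psi$ there; since $\psi\in C^\infty_0(\mathbb{R}^d)$, the function $f(\cdot,\tau)$ (and hence $|\nabla f|^2/W$) is supported in one fixed ball, uniformly in $\tau\in[0,s]$. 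Existence of a smooth $f$ I would take from standard theory for the backward Kolmogorov equation (e.g.\ a stochastic representation, or first mollifying $G$ to a uniformly positive definite smooth matrix, solving, and passing to the limit along the a priori bound below); the bulk of the work is the bound itself.

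Next comes the core identity. Write $\Phi=|\nabla f|^2$ and $w_k=\partial_{x_k}f$. Differentiating the equation in $x_k$ and contracting with $w_k$ gives
\begin{multline*}
\partial_\tau\Phi-\zeta_M L_{g,h}\Phi
=2\langle\nabla\zeta_M,\nabla f\rangle\,L_{g,h}f
+2\zeta_M\sum_k w_k(\partial_{x_k}g^{ij})\partial_{x_i}\partial_{x_j}f\\
+2\zeta_M\sum_{i,k}(\partial_{x_k}h^i)\,\partial_{x_i}f\,\partial_{x_k}f
-2\zeta_M\sum_i\langle G\nabla w_i,\nabla w_i\rangle ,
\end{multline*}
and the last term is $\le0$ since $G\ge0$: this is the ``good'' term that will absorb the second derivatives of $f$. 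Dividing by $W$ and using $W\ge1$ and the product rule, I get an identity $\partial_\tau v=\widetilde L v+W^{-1}R+W^{-1}\zeta_M v\,L_{g,h}W$ for $v=\Phi/W$, where $R$ is the right‑hand side of the displayed identity and $\widetilde L v=\zeta_M L_{g,h}v+2\zeta_M W^{-1}\langle G\nabla v,\nabla W\rangle$ is again a (degenerate) parabolic operator with second‑order part $\zeta_M G\ge0$.

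The heart of the matter is to estimate $R$ so as to reach $\partial_\tau v\le\widetilde L v+(C_0+1)v$ pointwise. I would use: the differential consequence $\sum_{i,k}(\partial_{x_k}h^i)\xi_i\xi_k\le\theta(x)|\xi|^2$ of the hypothesis $\langle h(x+y)-h(x),y\rangle\le\theta(x)|y|^2$ (set $y=r\xi$, $r\downarrow0$), which matches the $2\theta$ in $\Lambda$ exactly; the Cauchy--Schwarz bounds $|L_{g,h}f|\le|h|\,|\nabla f|+\sqrt{\operatorname{tr}G}\,\big(\sum_i\langle G\nabla w_i,\nabla w_i\rangle\big)^{1/2}$ and $\big|\sum_k w_k(\partial_{x_k}g^{ij})\partial_{x_i}\partial_{x_j}f\big|\le2|\nabla f|\,\big(\sum_{i,j,k}|\partial_{x_k}q^{ij}|^2\big)^{1/2}\big(\sum_i\langle G\nabla w_i,\nabla w_i\rangle\big)^{1/2}$ coming from $G=Q^2$; the hypothesis $L_{g,h}W\le(C_0-\Lambda)W$, so that $W^{-1}\zeta_M v\,L_{g,h}W\le\zeta_M v(C_0-\Lambda)$ and the term $-\zeta_M v\Lambda$ is available for absorption; and, on $\operatorname{supp}\nabla\zeta_M\subset\{M<(1+|x|^2)^\kappa<2M\}$, the estimate $|\nabla\zeta_M|^2\le C\zeta_M\big|\nabla((1+|x|^2)^\kappa/M)\big|^2$ together with $\big|\nabla((1+|x|^2)^\kappa/M)\big|\le4\kappa(1+|x|^2)^{-1/2}$ (valid there because $(1+|x|^2)^\kappa\le2M$). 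A chain of Young's inequalities then plays off $2\zeta_M\sum_i\langle G\nabla w_i,\nabla w_i\rangle$, the $4\sum|\partial_{x_k}q^{ij}|^2$ term, the $\delta(1+|x|^2)^{-1}|h|^2$ term and the $\delta(1+|x|^2)^{-2}|\operatorname{tr}G|^2$ term of $\Lambda$ against, respectively, the second‑derivative‑of‑$f$ term, the square roots of $G$, and the two pieces of $2\langle\nabla\zeta_M,\nabla f\rangle L_{g,h}f$; the powers of $\kappa$ produced by $\nabla\zeta_M$ are exactly what makes the residual constants swallowable, and $\kappa=32^{-1}\min\{\delta,C^{-2}\}$ is the precise threshold making every leftover coefficient $\le1$. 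Where $\zeta_M=0$ the inequality $\partial_\tau v=0\le(C_0+1)v$ is trivial. I expect this bookkeeping -- in particular making the cross‑terms that carry $\nabla\zeta_M$, which live in the transition shell $0<\zeta_M<1$, absorb with constants \emph{independent of $M$} -- to be the main technical obstacle.

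Finally, I would close with the maximum principle. Since $v=|\nabla f|^2/W$ is continuous, non‑negative, compactly supported in $x$ uniformly in $\tau$, and $v(\cdot,0)=|\nabla\psi|^2/W\le1$ (as $|\nabla\psi|\le\sqrt W$), the standard comparison argument applies: for $\varepsilon>0$ the function $z=v\,e^{-(C_0+1)\tau}-\varepsilon\tau$ is bounded, tends to $-\varepsilon\tau$ at spatial infinity, hence attains its supremum; at an interior maximum with $\tau>0$ one has $\nabla v=0$, $D^2_x v\le0$, $\partial_\tau z\ge0$, so $\widetilde L v\le0$ and $\partial_\tau z\le e^{-(C_0+1)\tau}(\partial_\tau v-(C_0+1)v)-\varepsilon\le-\varepsilon<0$, a contradiction. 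Hence the supremum of $z$ is attained at $\tau=0$ and is $\le1$, so $v\,e^{-(C_0+1)\tau}\le1+\varepsilon s$; letting $\varepsilon\to0$ gives $|\nabla f(x,\tau)|^2\le W(x)\,e^{(C_0+1)\tau}$, that is, $|\nabla f(x,t)|\le\sqrt{W(x)}\,e^{(C_0+1)(s-t)/2}$.
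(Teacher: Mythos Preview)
Your proposal is correct and follows essentially the same Bernstein-type argument as the paper: differentiate the equation, contract with $\nabla f$, use $G=Q^2$ and the trace inequality to absorb the second-derivative terms into the good term $-2\zeta_M\sum_k\langle G\nabla w_k,\nabla w_k\rangle$, exploit $|\eta'|^2\le C\eta$ and the explicit bound $|\nabla\zeta_M|\le 4\kappa(1+|x|^2)^{-1/2}|\eta'|$ to handle the transition shell, and finish with the maximum principle for $|\nabla f|^2/W$. The only cosmetic differences are that you work in forward time and pass to $v=\Phi/W$ in one step, whereas the paper stays in backward time, first derives $\partial_t u+\zeta_M L_{g,h}u+Zu\ge0$ for $u=\tfrac12|\nabla f|^2$, and then substitutes $u=wW$; also the paper splits $\langle\nabla\zeta_M,\nabla f\rangle L_{g,h}f$ into its first- and second-order parts before estimating, while you keep it together and bound $|L_{g,h}f|$ via $|h|\,|\nabla f|+\sqrt{\operatorname{tr}G}\,(\sum_k\langle G\nabla w_k,\nabla w_k\rangle)^{1/2}$.
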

\begin{proof}
Existence of a smooth solution $f$ is well-known
(cf., for instance,  \cite[Theorem 2]{Ol}, \cite[Theorems 3.2.4,  3.2.6]{str}).
Let us obtain a bound for  $\nabla f$. Obviously the inequality
$\langle h(x+y,t)-h(x,t),y\rangle\le\theta(x)|y|^{2}$ for smooth functions $h$ yields
$$
\langle\mathcal{H}(x,t)y,y\rangle\le\theta(x)|y|^{2},\quad\mathcal{H}=(\partial_{x_{j}}h^{i})_{i,j\le d}.
$$
Moreover,  all inequalities from Lemma's formulation and the latter one should hold true only on the support of $\zeta_M$,
since the operator $L$ is multiplied by $\zeta_M$ and all coefficients are zero outside the support of $\zeta_M$.
Set  $u=2^{-1}\sum_{k=1}^{d}|\partial_{x_{k}}f|^{2}$. Differentiating the
equation $\partial_{t}f+\zeta_{M}L_{g, h}f=0$ with respect to $x_{k}$ and multiplying by
 $\partial_{x_{k}}f$ we obtain
\begin{multline*}
\partial_{t}u+\zeta_{M}L_{g, h}u+\zeta_{M}\langle\mathcal{H}\nabla f,\nabla f\rangle+\langle\nabla\zeta_{M},
\nabla f\rangle\langle h,\nabla f\rangle+\zeta_{M}\partial_{x_{k}}a^{ij}\partial_{x_{i}x_{j}}^{2}f\partial_{x_{k}}f+
\\
+g^{ij}\partial_{x_{i}x_{j}}^{2}f\partial_{x_{k}}f\partial_{x_{k}}\zeta_{M}-\zeta_{M}g^{ij}
\partial_{x_{k}x_{j}}^{2}f\partial_{x_{k}x_{j}}^{2}f=0.
\end{multline*}
Note that  $\langle\mathcal{H}\nabla f,\nabla f\rangle\le2\theta u$
and $\langle\nabla\zeta_{M},\nabla f\rangle\langle h,\nabla f\rangle\le2|\nabla\zeta_{M}||h|u$.
Consider the following expression:
$$
\zeta_{M}\partial_{x_{k}}g^{ij}\partial_{x_{i}x_{j}}^{2}f\partial_{x_{k}}f+g^{ij}\partial_{x_{i}x_{j}}^{2}
f\partial_{x_{k}}f\partial_{x_{k}}\zeta_{M}-\zeta_{M}g^{ij}\partial_{x_{k}x_{j}}^{2}f\partial_{x_{k}x_{j}}^{2}f.
$$
Remind that  $Q=\sqrt{G}$. Thus
\begin{multline*}
\sum_{i,j,k}\partial_{x_{k}}q^{ij}\partial_{x_{i}x_{j}}^{2}f\partial_{x_{k}}f=2\sum_{i,j,m,k}\partial_
{x_{k}}q^{im}q^{mj}\partial_{x_{i}x_{j}}^{2}f\partial_{x_{k}}f\le\\
\le2\sum_{i,m}\Bigl(\sum_{k}|\partial_{x_{k}}q^{im}|^{2}\Bigr)^{1/2}\Bigl(\sum_{k}|\partial_
{x_{k}}f|^{2}\Bigr)^{1/2}\Bigl|\sum_{j}q^{mj}\partial_{x_{i}x_{j}}^{2}f\Bigr|,
\end{multline*}
with is dominated by
$$
4u\sum_{i,m,k}|\partial_{x_{k}}q^{im}|^{2}+2^{-1}q_{i,m}\Bigl|\sum_{j}
q^{mj}\partial_{x_{i}x_{j}}^{2}f\Bigr|^{2}.
$$
Note that
$$
\sum_{i,m}\Bigl|\sum_{j}q^{mj}\partial_{x_{i}x_{j}}^{2}f\Bigr|^{2}=
\sum_{i,j,k}g^{ij}\partial_{x_{k}x_{j}}^{2}f\partial_{x_{k}x_{j}}^{2}f.
$$
Using an obvious inequality  $xy\le(4+4{\rm tr}G)^{-1}x^{2}+(1+{\rm tr}G)y^{2}$
we get
$$
g^{ij}\partial_{x_{i}x_{j}}^{2}f\partial_{x_{k}}f\partial_{x_{k}}\zeta_{M}\le2u\frac
{|\nabla\zeta_{M}|^{2}}{\zeta_{M}}(1+{\rm tr}G)+\zeta_{M}(4+4{\rm tr}G)^{-1}\Bigl(g^{ij}\partial_{x_{i}x_{j}}^{2}f\Bigr)^{2}.
$$
Note that
$$
\Bigl(\sum_{i,j=1}^{d}g^{ij}\partial_{x_{i}}\partial_{x_{j}}f\Bigr)^{2}
\le\Bigl(\sum_{i=1}^{d}g^{ii}\Bigr)\Bigl(\sum_{i,j,k}^{d}g^{ij}\partial_{x_{i}}
\partial_{x_{k}}f\partial_{x_{j}}\partial_{x_{k}}f\Bigr).
$$
This follows from  the inequality
$$|{\rm tr}(AB)|^2\le {\rm tr} A\,{\rm tr}(AB^2)$$
for symmetric matrices  $A$ and $B$ where
$A$ is non-negative. The latter can be derived by the application of Cauchy-Bunyakovsky inequality to
the scalar product
 $\langle X,Y\rangle={\rm tr}\,(XY^{*})$
of matrices $X=A^{1/2}$, $Y=BA^{1/2}$ in the space of  $d\times d$ matrices
(since  ${\rm tr}\,(YY^{*})={\rm tr}\,(BA^{1/2}A^{1/2}B)={\rm tr}\,(AB^{2})$).
Combining all estimates together, we get
$$
\partial_{t}u+\zeta_{M}L_{g,h}u+Zu\ge0,
$$
where
$$
Z=\frac{|\nabla\zeta_{M}|^{2}}{\zeta_{M}}(1+{\rm tr}G)+|\nabla\zeta_{M}|
|h|+2\zeta_{M}\theta+4\zeta_{M}\sum_{i,j,k}\bigl|\partial_{x_{k}}q^{ij}\bigr|^{2}.
$$
Since
$$
|\nabla\zeta_{M}(x)|\le4\kappa(1+|x|^{2})^{-1/2}
\bigl|\eta'\bigl((1+|x|^{2})^{\kappa}/M\bigr)\bigr|,
$$
we get
 $$
Z\le4\kappa C^2+16\kappa C
+\zeta_{M}\Bigl(4\sum_{i,j,k}
\bigl|\partial_{x_{k}}q^{ij}\bigr|^{2}+
2\theta+2\kappa(1+|x|^{2})^{-1}|h|^{2}+2\kappa(1+|x|^{2})^{-2}|{\rm tr}G|^{2}\Bigr).
$$
Choose such $\kappa>0$ that
$$
Z\le1+\zeta_{M}\Bigl(4\sum_{i,j,k}\bigl|\partial_{x_{k}}
\sigma_{N}^{ij}\bigr|^{2}+2\theta+\delta(1+|x|^{2})^{-1}|h|^{2}+\delta(1+|x|^{2})^{-2}|{\rm tr}G|^{2}\Bigr).
$$
Set  $u=wW$. Then  $w$ satisfies
$$
\partial_{t}w+\zeta_{M}L_{g,\widetilde{h}}w+\widetilde{Z}w\ge0,
$$
where
$$
\widetilde{h}^{k}=h^{k}+2\frac{g^{kj}\partial_{x_{j}}W}{W},\quad\widetilde{Z}=Z+\zeta_{M}\frac{L_{g,h}W}{W}.
$$
According to Lemma assumptions  $\widetilde{Z}\leq C_{0}+1$. Note that  $|w(x, s)|\le 1$.
Then the maximum pronciple (cf. \cite[Theorem 3.1.1]{str}) ensures  $|w(x, t)|\le e^{(C_0+1)(s-t)}$
which completes the proof.
\end{proof}

Now we can proceed to the proof of the Theorem.

\begin{proof}
Suppose there are two solutions $\mu$ and $\sigma$.
Let $\psi\in C^{\infty}_0(\mathbb{R}^d)$ and  $|\nabla\psi(x)|\le\sqrt{W(x)}$.
Set $M\ge 1$. Similarly to Lemma  \ref{lem-grad-est} set
$\kappa=32^{-1}\min\{\delta_{\mu}, C^{-2}\}$, where $C$ is taken from the definition of~$\eta$,
and  $\zeta_M=\eta((1+|x|^2)^{\kappa}/M)$. The function  $\eta$ is defined before Lemma~\ref{lem-grad-est}.
Let  $\varphi\in C^{\infty}_0(\mathbb{R})$ be a cut-off function such that  $0\le\varphi\le 1$,
$\varphi(x)=1$ for $|x|<1$ and
$\varphi(x)=0$ for $|x|>2$. Suppose also that for some  $C>0$ and all
 $x\in\mathbb{R}$ one has
$|\varphi''(x)|^2+|\varphi'(x)|^2\le C\varphi(x)$.
For each  $K\ge 1$ set
$\varphi^U_K(x)=\varphi(U(x)/K)$.
Assume  $M$ is big enough and  $\zeta_M(x)=1$ for  $|x|<2K$.
Set  $B_M=\{x: |x|<(2M)^{1/2\kappa}\}$.
Extend the functions  $b^i(\mu)$ on the whole space $\mathbb{R}^{d+1}$ as follows: $b^i(\mu, x, t)=b^i(\mu, x, T)$ for $t>T$ and
$b^i(\mu, x, t)=b^i(\mu, x, 0)$ for $t<0$. Extend  $a^{ij}$ in the similar way.
Obviously (DH1)-(DH4) are fulfilled for new  $b^i$ and $a^{ij}$.
According to Lemma \ref{approx} and Remark  \ref{approx-r} there exists a sequence
$b_n\in C^{\infty}(\mathbb{R}^{d+1})$ satisfying the following conditions:

\, (i) \, $\lim_{n\to\infty}\|b_n-b(\mu)\|_{L^1(\mu+\sigma, B_M\times[0, T])}=0$,

\, (ii) \, $\langle b_n(x+y, t)-b_n(x, t), y\rangle\le\widetilde{\theta}(x)|y|^2$
for all $(x, t)\in B_M\times[0, T]$ and $y\in\mathbb{R}^d$, where~$\widetilde{\theta}(x)=\theta_{\mu}(x)+1$,

\, (iii) \, for all $(x, t)\in B_M\times[0, T]$ one has  (\ref{ineq})
with $\widetilde{\theta}$ instead of $\theta_{\mu}$, $b_n$ instead of $b(\mu)$ and
$\widetilde{\Lambda}(x)=\Lambda_{\mu}(x)+2$ instead of $\Lambda_{\mu}$,

\, (iv) \, for all $(x, t)\in B_M\times[0, T]$ one has
$$
a^{ij}(x, t)\partial_{x_i}\partial_{x_j}W(x)+b^i_n(x, t)\partial_{x_i}W(x)\le
(\widetilde{C}_0-\widetilde{\Lambda}(x))W(x),
$$
where $\widetilde{C}_0=C_{\mu}+3$.

Suppose  $f_n$ satisfies the Cauchy problem
$$
\partial_tf_n+\zeta_Ma^{ij}\partial_{x_i}\partial_{x_j}f_n+\zeta_Mb^i_n\partial_{x_i}f_n=0, \quad f_n|_{t=s}=\psi.
$$
Due to the maximum principle  $\sup|f_n|=\max|\psi|$.
Lemma \ref{lem-grad-est} yields the following bound
$|\nabla_xf_n(x, t)|\le C_1\sqrt{W(x)}$, where $C_1$ is independent of  $x, t, s, n$ and $K$.
Substituting~$u=\varphi^U_Kf_n$ it into the definition identity~(\ref{r2})
for the solutions $\mu$ and $\sigma$,
we get
$$
\int\psi\,d\mu_s=\int\varphi^U_Kf_n\,d\nu+
\int_0^s\int\bigl[\varphi^U_K\langle b(\mu)-b_n, \nabla f_n\rangle
+2\langle A\nabla\varphi^U_K, \nabla f_n\rangle
+f_nL_{\mu}\varphi_k\bigr]\,d\mu_t\,dt,
$$
$$
\int\psi\,d\sigma_s=\int\varphi^U_Kf_n\,d\nu+
\int_0^s\int\bigl[\varphi^U_K\langle b(\sigma)-b_n, \nabla f_n\rangle+2\langle A\nabla\varphi^U_K, \nabla f_n\rangle
+f_nL_{\sigma}\varphi^U_K\bigr]\,d\sigma_t\,dt.
$$
Here we used the fact  $\zeta_M(x)=1$ for $x\in{\rm supp}\varphi_K^U$ and cancelled the terms
$\varphi_K^U\zeta_Ma^{ij}\partial_{x_i}\partial_{x_j}f_n$ and
$\varphi_K^Ua^{ij}\partial_{x_i}\partial_{x_j}f_n$.
Subtracting the second identity from the first, we come to
\begin{multline*}
\int\psi\,d(\mu_s-\sigma_s)\le
\int_0^s\int\bigl[\varphi^U_K|b(\mu)-b_n||\nabla f_n|+2|A\nabla\varphi^U_K||\nabla f_n|
+|f_n||L_{\mu}\varphi^U_K|\bigr]\,d\mu_t\,dt+
\\
+\int_0^s\int\bigl[\varphi^U_K|b(\sigma)-b_n||\nabla f_n|+2|A\nabla\varphi^U_K||\nabla f_n|
+|f_n||L_{\sigma}\varphi^U_K|\bigr]\,d\sigma_t\,dt.
\end{multline*}
Note that
$|b(\sigma)-b_n|\le |b(\sigma)-b(\mu)|+|b(\mu)-b_n|$ and $|\nabla f_n|\le C_1\sqrt{W}$.
Expressions  $|A\nabla\varphi^U_K|$, $|L_{\mu}\varphi^U_K|$ and $|L_{\sigma}\varphi^U_K|$ are estimated similarly
to the proof of Theorem  \ref{th1}.
Using~(DH4) and letting at first  $n\to\infty$ and then  $K\to\infty$, we come to
$$
\int\psi\,d(\mu_s-\sigma_s)\le C_1\int_0^s\int|b(\mu)-b(\sigma)|\sqrt{W}\,d\sigma_t\,dt.
$$
Using (DH3) and the definition of the metric  $w_W$, we arrive at
$$
w_W(\mu_s, \sigma_s)\le C_1N\int_0^sG(w_W(\mu_t, \sigma_t))\,dt, \quad N=\sup_t\int V\,d\sigma_t.
$$
Gronwall's inequality yields  $w_W(\mu_s, \sigma_s)=0$ for $s\in [0, T]$.
\end{proof}

\section{\sc Diffusion matrix depends on the solution}

Let us now consider the case when the diffusion matrix $A$ depends on $\mu$.
It is the most difficult situation as we need bounds for the second derivatives of the solution to the
adjoint problem. However, generally speaking one can not estimate them with first derivatives of the
initial condition.

Nevertheless, if the diffusion matrix is not degenerate, is bounded and is Lipschitz with respect to ~$x$,
one can estimate the second derivatives of the solution $f$ to the adjoint problem with first derivatives of  $f$
with a coefficient $(s-t)^{-1/2}$; the bound for the first derivatives can be obtained similarly to the previous
section.
So one can preserve the continuity assumption with respect to the metric $w_W$, introduced in the previous section.
This seems important as this metric arises naturally in most applied problems.
Other possible metrises are discussed at the end of the paper.

As above, we consider only solutions from the class $M_T(V)$ where $V\in C(\mathbb{R}^d)$ and~$V\ge 1$.
Let us introduce the following assumptions:

\, (NH1) \, For each  $\mu\in M_T(V)$ there exist constants $\lambda_{\mu}>0$ and $\Lambda_{\mu}>0$
such that
$$
\lambda_{\mu}^{-1}|\xi|^2\le \langle A(\mu, x, t)\xi, \xi\rangle\le \lambda_{\mu}|\xi|^2, \quad
|a^{ij}(\mu, x, t)-a^{ij}(\mu, y, t)|\le\Lambda_{\mu}|x-y|
$$
for all $x, y, \xi\in\mathbb{R}^d$, $t\in[0, T]$.

\, (NH2) \, For each  $\mu\in M_T(V)$ and each $x\in\mathbb{R}^d$ the following quantities are finite:
$$
B(\mu, x)=\sup_{t\in[0, T]}\sup_{|x-y|\le 1}|b(\mu, y, t)|,
$$
$$
\Theta(\mu, x)=\sup_{t\in[0, T]}\sup_{|x-y|\le 1, |x-z|\le 1, y\ne z}\frac{|b(\mu, y, t)-b(\mu, z, t)|}{|y-z|}.
$$
Moreover, for some function $W\in C^2(\mathbb{R}^d)$ such that
 $|x|\widetilde{W}(x)V(x)^{-1}$ is a bounded function, $W\ge 1$ and
for each measure $\mu\in \mathcal{M}_{T}(V)$ there exist such constants $C_{\mu}>0$, $1>\delta_{\mu}>0$
 that
$$
L_{\mu}W(x, t)\le (C_{\mu}-2\Theta(\mu, x)-\delta_{\mu}(1+|x|^2)^{-1}B^2(\mu, x))W(x).
$$

\, (NH3) \, There exists a continuous increasing function  $G$ on $[0, +\infty)$
such that  $G(0)=0$ and the following inequalities hold:
$$
|A(\mu, x, t)-A(\sigma, x, t)|\le G(w_{W}(\mu_t, \sigma_t)),
$$
$$
|b(\mu, x, t)-b(\sigma, x, t)|\le V(x)W^{-1/2}(x)G(w_{W}(\mu_t, \sigma_t))
$$
for all $(x, t)\in\mathbb{R}^d\times[0, T]$ and $\mu, \sigma\in \mathcal{M}_{T}(V)$.

\, (NH4) \,  There exists a function $U\in C^2(\mathbb{R}^d)$ such that
$U\ge 0$ and $\lim\limits_{|x|\to+\infty}U(x)=+\infty$, such that for each measure
 $\mu\in\mathcal{M}_T(V)$ there exists such a constant  $\beta(\mu)$ that
$$
\Bigl(B(\mu, x)+\sqrt{\Theta(\mu, x)}\Bigr)\sup_{|x-y|\le 1}\sqrt{W(y)}+
\frac{|\nabla U(x)|^2}{U^2(x)}
+\frac{|L_{\mu}U(x, t)|}{U(x)}\le \beta(\mu)V(x)
$$
for all  $(x, t)\in\mathbb{R}^d\times[0, T]$.

\begin{theorem}\label{th5}
Assume  {\rm (NH1)}, {\rm (NH2)}, {\rm (NH3)}, {\rm (NH4)} hold true.
If for some  $p>2$
$$
\int_0\frac{du}{G^p(u^{1/p})}=+\infty,
$$
then there exists at most one solution to the Cauchy problem {\rm (\ref{e1})} from the class
$\mathcal{M}_{T}(V)$.
\end{theorem}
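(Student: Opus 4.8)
The plan is to run the same Holmgren-type duality scheme as for Theorems \ref{th1} and \ref{th2}. Let $\mu$ and $\sigma$ be two solutions in $\mathcal{M}_{T}(V)$, fix $s\in(0,T)$ and $\psi\in C^{\infty}_0(\mathbb{R}^d)$ with $|\nabla\psi(x)|\le\sqrt{W(x)}$, and solve the adjoint Cauchy problem $\partial_t f+L_\mu f=0$, $f|_{t=s}=\psi$. As in the proof of Theorem \ref{th2} this is carried out after localizing by cut-offs $\zeta_M$ and $\varphi^U_K$ and smoothing the coefficients; the difference is that one now smooths not only $b(\mu,\cdot,\cdot)$ but also $A(\mu,\cdot,\cdot)$, keeping its ellipticity constant $\lambda_\mu$ and its Lipschitz-in-$x$ constant $\Lambda_\mu$ by Remark \ref{approx-r}(iv), so that a bounded smooth solution $f$ exists; the Lyapunov bounds of (NH2) and (NH4) are preserved for the smoothed, localized operator exactly as before. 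Substituting $u=\varphi^U_K f$ into the identity (\ref{r2}) for $\mu$ and for $\sigma$ and subtracting, one gets, up to terms that vanish as the smoothing parameters and then $K$ (hence $M$) tend to infinity,
$$
\int\psi\,d(\mu_s-\sigma_s)=\int_0^s\int\bigl(L_\mu-L_\sigma\bigr)f\,d\sigma_t\,dt .
$$

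The essentially new point is that, since $A$ depends on $\mu$, the integrand contains $\bigl(a^{ij}(\mu,x,t)-a^{ij}(\sigma,x,t)\bigr)\partial_{x_i}\partial_{x_j}f$, so a bound on the \emph{second} spatial derivatives of $f$ is required. First I would reprove the weighted gradient estimate $|\nabla f(x,t)|\le C\sqrt{W(x)}$ on $[0,s]$ by the Bernstein-type argument of Lemma \ref{lem-grad-est}: it goes through because $A(\mu,\cdot,\cdot)$ is now non-degenerate with bounded Lipschitz-in-$x$ square root, so the extra terms $4\sum|\partial_{x_k}\sigma^{ij}|^2$ and $\delta(1+|x|^2)^{-2}|{\rm tr}\,A|^2$ are bounded and absorbed into the constant, while $2\Theta(\mu,\cdot)$ and $\delta(1+|x|^2)^{-1}B^2(\mu,\cdot)$ are controlled by (NH2). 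Then, applying interior parabolic (Schauder-type) estimates on unit balls to the equation satisfied by $\nabla f$ — here the uniform ellipticity and the Lipschitz continuity in $x$ from (NH1) enter — one obtains, with $C$ independent of the approximation parameters,
$$
|\partial_{x_i}\partial_{x_j}f(x,t)|\le C(s-t)^{-1/2}\sup_{|y-x|\le1}\sqrt{W(y)},\qquad t\in[0,s).
$$
Combining this with (NH3), with (NH4) — which makes $\sup_{|y-x|\le1}\sqrt{W(y)}$ and the terms produced by $L_\mu\varphi^U_K$ and $\langle A\nabla\varphi^U_K,\nabla f\rangle$ integrable against $\mu_t,\sigma_t$ with bound $\beta V$ — and with $\sup_t\int V\,d(\mu_t+\sigma_t)<\infty$, passing to the limit and taking the supremum over admissible $\psi$, one arrives at
$$
w_W(\mu_s,\sigma_s)\le C\int_0^s\frac{G\bigl(w_W(\mu_t,\sigma_t)\bigr)}{\sqrt{s-t}}\,dt .
$$
The integrable singularity $(s-t)^{-1/2}$ is exactly what the second-derivative bound contributes; the non-singular term coming from $|b(\mu)-b(\sigma)|\,|\nabla f|$ is subsumed in it since $(s-t)^{-1/2}\ge T^{-1/2}$.

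It remains to deduce $w_W(\mu_s,\sigma_s)\equiv0$ from this singular integral inequality together with the hypothesis $\int_{0+}du/G^p(u^{1/p})=+\infty$ for some $p>2$. Put $\varphi(s)=w_W(\mu_s,\sigma_s)$ (bounded and Borel in $s$), choose $q=p/(p-1)\in(1,2)$ so that its conjugate exponent equals $p$, and apply H\"older's inequality to the kernel: since $\int_0^s(s-t)^{-q/2}\,dt\le c_q T^{1-q/2}<\infty$, one gets $\varphi(s)^p\le C_1\int_0^s G(\varphi(t))^p\,dt=C_1\int_0^s\widetilde G\bigl(\varphi(t)^p\bigr)\,dt$, where $\widetilde G(u)=G(u^{1/p})^p$ is continuous, increasing, $\widetilde G(0)=0$, and $\int_{0+}du/\widetilde G(u)=+\infty$ by hypothesis. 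Hence $\Phi:=\varphi^p$ satisfies the Bihari--Osgood inequality $\Phi(s)\le\varepsilon+C_1\int_0^s\widetilde G(\Phi(t))\,dt$ for every $\varepsilon>0$; letting $\varepsilon\to0$ yields $\Phi\equiv0$, i.e. $w_W(\mu_s,\sigma_s)=0$ for all $s\in[0,T]$.

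I expect the main obstacle to be the second bound displayed above: one must combine interior $C^{2,\alpha}$ parabolic estimates (available from the uniform ellipticity and the Lipschitz-in-$x$ regularity of the smoothed diffusion matrix, with constants controlled through (NH1)) with the Lyapunov-type gradient bound, \emph{uniformly} in the approximation parameters, so that the $(s-t)^{-1/2}$ blow-up near $t=s$ remains integrable and the spatial weight is no worse than $\sup_{|y-x|\le1}\sqrt{W(y)}$ — precisely the quantity that (NH2)--(NH4) are tailored to absorb.
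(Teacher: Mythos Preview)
Your approach is essentially the paper's, and the final integral inequality plus the H\"older/Bihari step match exactly. There is one substantive correction, though: the second-derivative bound you display,
\[
|\partial_{x_i}\partial_{x_j}f(x,t)|\le C(s-t)^{-1/2}\sup_{|y-x|\le1}\sqrt{W(y)},
\]
cannot hold with $C$ depending only on $d,\lambda_\mu,\Lambda_\mu$ when the drift is unbounded. Interior parabolic estimates on a unit ball inevitably see the local size of $b$ and of $D_xb$; the correct form, obtained in the paper via a parabolic rescaling $v(y,\tau)=f(x_0+\varepsilon y,t_0+\varepsilon^2\tau)$ with $\varepsilon\sim(s-t_0)^{1/2}\bigl(1+B(\mu,x_0)+\sqrt{\Theta(\mu,x_0)}\bigr)^{-1}$ and Knerr's interior estimate (Lemma~\ref{lem-dr}), is
\[
|D_x^2 f(x,t)|\le \frac{C\sqrt{T+1}}{\sqrt{s-t}}\Bigl(1+B(\mu,x)+\sqrt{\Theta(\mu,x)}\Bigr)\sup_{|y-x|\le1}\sqrt{W(y)}.
\]
Correspondingly, (NH4) does \emph{not} assert that $\sup_{|y-x|\le1}\sqrt{W(y)}$ alone is dominated by $\beta V$; it bounds precisely the product $\bigl(B(\mu,x)+\sqrt{\Theta(\mu,x)}\bigr)\sup_{|y-x|\le1}\sqrt{W(y)}$ by $\beta(\mu)V(x)$. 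Once you insert this factor, the term $|A(\mu)-A(\sigma)|\,|D_x^2f|$ integrates against $\sigma_t$ to give $G(w_W(\mu_t,\sigma_t))(s-t)^{-1/2}$ times a $V$-moment, and the rest of your argument goes through verbatim. So the scheme is right; only the spatial weight in the Hessian bound needs the drift-dependent prefactor, and (NH4) is tailored to absorb exactly that.
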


Let us give an example of application of the last theorem.

\begin{example}\rm
Let $\alpha=(\alpha^{ij}(x, y))$ be a symmetric positive definite matrix and
$$\lambda^{-1}I\le \alpha(x, y)\le \lambda I$$
for some $\lambda>0$ and all $x$.
Moreover, $|\alpha(x, y)-\alpha(z, y)|\le\Lambda|x-y|$ for all $x, y, z\in\mathbb{R}^d$.
Set
$$
A(\mu, x)=\int \alpha(x, y)(1+|y|^m)\,d\mu_t(y)
$$
for $m\ge 1$.
Then there exists at most one solution to the Cauchy problem
$$
\partial_t\mu=\partial_{x_i}\partial_{x_j}(a^{ij}(\mu, x)\mu), \quad \mu|_{t=0}=\nu,
$$
from the class of measures  $\mu$ satisfying
$$
\sup_{t\in[0, T]}\int|y|^{m+1}\,d\mu_t(y)<\infty.
$$
\end{example}

To prove Theorem \ref{th5}, we need the following lemma, generalizing  \cite[Theorem 1]{Kerr}.

\begin{lemma}\label{lem-dr}
Assume that functions $q^{ij}$ and $h^i$ are smooth bounded with all derivatives
on~$U(x_0, 2)\times(-1, s)$.
Set
$$
B(x_0)=\sup_{t\in(-1, s)}\sup_{U(x_0, 1/2)}|h(x, t)|, \quad
\Theta(x_0)=\sup_{t\in(-1, s)}\sup_{U(x_0, 1/2)}|D_x h(x, t)|.
$$
Suppose that the matrix $Q=(q^{ij})$ is symmetric and satisfies
$$
\lambda^{-1}\le Q\le \lambda I, \quad |Q(x, t)-Q(y, t)|\le \Lambda|x-y|
$$
for all  $x, y\in U(x_0, 2)$, $t\in (-1, s)$ and some positive numbers $\lambda, \Lambda$.
Then the classical solution $f\in C^{2, 1}\bigl(U(x_0, 2)\times(-1, s)\bigr)$ to
$$\partial_tf+q^{ij}\partial_{x_i}\partial_{x_j}f+h^if=0$$
for each  $t_0\in(0, s)$  admits the bound
$$
|D^2f(x_0, t_0)|\le \frac{C\sqrt{s+1}\Bigl(B(x_0)+\sqrt{\Theta(x_0)}+1\Bigr)}{\sqrt{s-t_0}}\sup_{U(x_0, 1)\times(-1, s)}|Df|,
$$
here $C$ depends only on  $d, \lambda, \Lambda$.
\end{lemma}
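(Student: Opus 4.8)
The plan is to reduce the estimate, by a parabolic dilation about $(x_{0},t_{0})$, to a fixed-scale interior bound, which is essentially \cite[Theorem~1]{Kerr}. The dilation is what produces the weight $(s-t_{0})^{-1/2}$: the equation $\partial_{t}f+q^{ij}\partial_{x_{i}}\partial_{x_{j}}f+h^{i}\partial_{x_{i}}f=0$ is backward parabolic, so the slice $\{t=s\}$ is its non‑smoothing face, and $(s-t_{0})^{1/2}$ is the parabolic distance from $(x_{0},t_{0})$ to that face. Relative to the plain driftless estimate there are two new features to accommodate: the first‑order term $h^{i}\partial_{x_{i}}f$, and the requirement that the constant depend only on the \emph{local} quantities $B(x_{0})$, $\Theta(x_{0})$ rather than on global bounds for $h$ — which is exactly what the localization to $U(x_{0},1/2)$ in the hypotheses is for, since in the applications $B(\mu,\cdot)$, $\Theta(\mu,\cdot)$ are finite but unbounded.

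Concretely I would set $r=\min\{\sqrt{s-t_{0}},\,1/2\}$ and $g(y,\tau)=f(x_{0}+ry,\,t_{0}+r^{2}\tau)$. Using $t_{0}>0$ and $r\le 1/2$ one checks that $g\in C^{2,1}\bigl(U(0,2)\times(-1,1)\bigr)$ and that $\partial_{\tau}g+\widetilde q^{\,ij}\partial_{y_{i}}\partial_{y_{j}}g+\widetilde h^{\,i}\partial_{y_{i}}g=0$ with $\widetilde q(y,\tau)=q(x_{0}+ry,t_{0}+r^{2}\tau)$ and $\widetilde h(y,\tau)=r\,h(x_{0}+ry,t_{0}+r^{2}\tau)$. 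Since $r\le 1$, the matrix $\widetilde Q=(\widetilde q^{\,ij})$ keeps the bounds $\lambda^{-1}I\le\widetilde Q\le\lambda I$ and a Lipschitz constant $\le\Lambda$ on $U(0,2)$, while $\sup_{U(0,1/2)}|\widetilde h|\le rB(x_{0})$ and $\sup_{U(0,1/2)}|D_{y}\widetilde h|\le r^{2}\Theta(x_{0})$, because $U(0,1/2)$ corresponds to $U(x_{0},r/2)\subset U(x_{0},1/2)$. Applying the fixed‑scale estimate to $g$ at $(0,0)$ gives
\[
|D^{2}g(0,0)|\le C(d,\lambda,\Lambda)\bigl(1+rB(x_{0})+r\sqrt{\Theta(x_{0})}\bigr)\sup_{U(0,1)\times(-1,1)}|Dg|.
\]
Scaling back, $D^{2}g(0,0)=r^{2}D^{2}f(x_{0},t_{0})$ and $Dg=r\,Df(x_{0}+r\cdot,t_{0}+r^{2}\cdot)$, and $U(0,1)\times(-1,1)$ maps into $U(x_{0},1)\times(-1,s)$; using $r\le 1$ this yields $|D^{2}f(x_{0},t_{0})|\le C\bigl(r^{-1}+B(x_{0})+\sqrt{\Theta(x_{0})}\bigr)\sup_{U(x_{0},1)\times(-1,s)}|Df|$. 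Finally $r^{-1}+B(x_{0})+\sqrt{\Theta(x_{0})}\le(r^{-1}+1)\bigl(B(x_{0})+\sqrt{\Theta(x_{0})}+1\bigr)$ and $r^{-1}+1=\max\{(s-t_{0})^{-1/2},2\}+1\le 4\sqrt{s+1}\,(s-t_{0})^{-1/2}$ because $s-t_{0}\le s+1$; this is the asserted bound.

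The substantive step — and the main obstacle — is the fixed‑scale estimate for $g$ \emph{with the first‑order term present}, i.e. the generalization of \cite[Theorem~1]{Kerr}. One cannot treat $\widetilde h^{\,i}\partial_{y_{i}}g$ as a bounded right‑hand side and quote non‑divergence parabolic Schauder, since $\widetilde Q$ is merely Lipschitz (not Dini continuous) and the ``datum'' carries only one derivative; the drift has to be threaded through the argument of \cite{Kerr}. Differentiating the equation in $y_{k}$ shows that $Dg$ solves a parabolic system whose leading part $\widetilde q^{\,ij}\partial_{y_{i}}\partial_{y_{j}}$ is diagonal in the components and whose lower‑order coefficients are bounded — the first‑order ones built from $D_{y}\widetilde q$ (size $\le\Lambda$) and $\widetilde h$ (size $\le rB(x_{0})$), the zeroth‑order one being $D_{y}\widetilde h$ (size $\le r^{2}\Theta(x_{0})$); then interior $W^{2,1}_{p}$ estimates (valid since $\widetilde Q$ is continuous, hence VMO), a Sobolev embedding with $p$ large, and a Poincar\'e inequality to replace lower‑order norms of $Dg$ by $\sup|Dg|$ control $\sup|D^{2}g|$ by $\sup|Dg|$ on a smaller cylinder; alternatively one runs the weighted‑energy (Bernstein‑type) scheme of \cite{Kerr} on $\frac12\sum_{k}|\partial_{x_{k}}f|^{2}$ with the drift retained. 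The delicate point is that either route a priori produces a constant depending linearly on $r^{2}\Theta(x_{0})$, whereas the scaling above needs dependence through $\sqrt{r^{2}\Theta(x_{0})}=r\sqrt{\Theta(x_{0})}$; this is recovered by one further dilation, by $\min\{1,\Theta(x_{0})^{-1/2}\}$, which normalizes that coefficient to size $\le 1$ — equivalently, by taking in the Young inequalities of the energy argument the exponent dictated by parabolic scaling ($\Theta$ has the dimension of inverse time). Carried out, this gives the fixed‑scale estimate with constant depending only on $d,\lambda,\Lambda$ and $1+B(x_{0})+\sqrt{\Theta(x_{0})}$, hence the lemma.
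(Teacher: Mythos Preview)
Your scaling strategy is sound and ultimately recovers the same bound, but the paper organizes the argument more economically in two respects. First, rather than scaling by $r=\min\{\sqrt{s-t_0},1/2\}$ and then performing a \emph{second} dilation to tame the zeroth-order coefficient $r^2\Theta$, the paper folds everything into one step: it takes
\[
\varepsilon=\tfrac12(s+1)^{-1/2}\sqrt{s-t_0}\,\bigl(B(x_0)+\sqrt{\Theta(x_0)}+1\bigr)^{-1},
\]
so that after rescaling one has $\varepsilon|h|+\varepsilon^2|D_xh|\le 1$ automatically; then \cite[Theorem~1]{Kerr} applies with a constant depending only on $d,\lambda,\Lambda$, and the factor $B+\sqrt{\Theta}+1$ emerges solely from $\varepsilon^{-1}$ when one scales back. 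Your two dilations, composed, reproduce exactly this $\varepsilon$, so your route is equivalent but longer. Second, the paper does not differentiate the equation or invoke $W^{2,1}_p$ theory for $Dg$: Knerr's estimate gives $|D^2v|\le C\sup|v|$, and the paper then converts $\sup|v|$ to $\sup|Dv|$ by subtracting the constant $v(0,0)$ (the equation has no zeroth-order term in $v$) and bounding the time oscillation $|v(0,\tau)-v(0,0)|$ by $C\sup|Dv|$ via a parabolic H\"older-in-time estimate \cite[Theorem~2.13]{Lb}, together with the trivial spatial bound $|v(y,\tau)-v(0,\tau)|\le\sup|Dv|$. This is shorter than your proposed route and sidesteps the bookkeeping of tracking how the constant in the differentiated system depends on the lower-order coefficients.
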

\begin{proof}
For $0<\varepsilon<\min\{\sqrt{s-t_0}, 1\}$ set
$$
v(y, s)=f(x_0+\varepsilon y, t_0+\varepsilon^2\tau)
$$
where $y\in U(0, 1)$ and $\tau\in (-1, 1)$.
Notice that  $v_y=\varepsilon f_x$, $v_{yy}=\varepsilon^2f_{xx}$, $v_{\tau}=\varepsilon^2f_t$.
Substituting to  the equation and multyplying by  $\varepsilon^{-2}$, one gets
$$
v_{\tau}+\widetilde{q}^{ij}v_{y_iy_j}+\widetilde{h}^iv_{y_i}=0,
$$
where
$$
\widetilde{q}^{ij}=q^{ij}(x_0+\varepsilon y, t_0+\varepsilon^2 \tau), \quad
\widetilde{h}^i=\varepsilon h^i(x_0+\varepsilon y, t_0+\varepsilon^2 \tau).
$$
Suppose $\varepsilon |h|+\varepsilon^2|h_x|\le 1$ for $y\in U(0, 1)$ and $\tau\in [-1, 1]$.
Then by \cite[Theorem 1.]{Kerr}
one has the following bound:
$$
\sup_{U(0, 1/4)\times[-1/3, 1/3]}|D^2v|\le C\sup_{U(0, 1/2)\times[-1/2, 1/2]}|v|.
$$
Note that  $v(y, s)-v(0, 0)$ satisfies the  equation, the last bound is preserved after adding a constant to the solution.
Hence one can assume  $v(0, 0)=0$.
Due to  \cite[Theorem 2.13.]{Lb} (cf. also \cite[Corollary 2.14]{Lb} and remarks after it),
one has
$$
\sup_{s\in[-1/2, 1/2]}|v(0, s)|=\sup_{s\in[-1/2, 1/2]}|v(0, s)-v(0, 0)|\le
C\sup_{U(0, 1/2)\times[-1, 1]}|Dv|.
$$
Moreover,
$$
|v(y, s)-v(0, s)|\le \sup_{U(0, 1)\times[-1, 1]}|Dv|
$$
Thus the following estimate holds:
$$
\sup_{U(0, 1/4)\times[-1/3, 1/3]}|D^2v|\le C\sup_{U(0, 1)\times[-1, 1]}|Dv|
$$
with constant $C$ depending only on  $\lambda, \Lambda, d$. In coordinates  $x, t$ one gets
$$
|D^2f(x_0, t_0)|\le C\varepsilon^{-1}\sup_{U(x_0, \varepsilon)\times[t_0-\varepsilon^2, t_0+\varepsilon^2]}|Df|.
$$
Finally, choose $\varepsilon$ as follows:
$$
\varepsilon=2^{-1}(s+1)^{-1/2}\sqrt{(s-t_0)}(B(x_0)+\Theta(x_0)^{1/2}+1)^{-1}.
$$
This completes the proof.
\end{proof}

Let us proceed to the proof of Theorem \ref{th5}.

\begin{proof}
Assume that there are two different solutions $\mu$ and $\sigma$.
Let  $\psi\in C^{\infty}_0(\mathbb{R}^d)$ and $|\nabla\psi(x)|\le\sqrt{W(x)}$.
Let $M\ge 1$. As above in Lemma \ref{lem-grad-est}, set
$\kappa=32^{-1}\min\{\delta_{\mu}, C^{-2}\}$, where $C$ is taken from the definition of  $\eta$,
and  $\zeta_M=\eta((1+|x|^2)^{\kappa}/M)$. The cut-off function  $\eta$ is defined before Lemma \ref{lem-grad-est}.
Let $\varphi\in C^{\infty}_0(\mathbb{R})$, $0\le\varphi\le 1$, $\varphi(x)=1$ if $|x|<1$ and
$\varphi(x)=0$ if $|x|>2$.
Assume also that for some number  $C'>0$ and all
 $x\in\mathbb{R}$ one has
$|\varphi''(x)|^2+|\varphi'(x)|^2\le C'\varphi(x)$.
For each  $K\ge 1$ set
$\varphi^U_K(x)=\varphi(U(x)/K)$.
Consider  $M$ large enough and thus $\zeta_M(x)=1$
for $|x|<3K$. Set  $B_M=\{x: |x|<(2M)^{1/2\kappa}\}$.
Extend  $b^i$ on the whole space $\mathbb{R}^{d+1}$ as follows: $b^i(\mu, x, t)=b^i(\mu, x, T)$ if $t>T$ and
$b^i(\mu, x, t)=b^i(\mu, x, 0)$ if $t<0$. Extend  $a^{ij}$ in the same way.
Obviously (DH1)--(DH4) are fulfilled for new $b^i$, $a^{ij}$.

Due to Lemma  \ref{approx} and Remark  \ref{approx-r} there exist sequences
$b_n^i, a^{ij}_n\in C^{\infty}(\mathbb{R}^{d+1})$ and $a^{ij}_n$ such that

\, (i) \, $\lim_{n\to\infty}\bigl(\|b_n^i-b^i(\mu)\|_{L^1(\mu+\sigma, B_M\times[-1, T])}
+\|a^{ij}_n-a^{ij}(\mu)\|_{L^1(\mu+\sigma, B_M\times[0, T])}\bigr)=0$,

\, (ii) \, for all  $(x, t)\in B_M\times[-1, T]$ and $y\in\mathbb{R}^d$ one has
$$\sup_{|x-y|\le 1/2}|b^i_n(y, t)|\le B(\mu, x), \quad
\sup_{|x-y|\le 1/2}|D_x b_n(y, t)|\le \Theta(\mu, x)$$
and the matrix $A_n=(a^{ij}_n)$ satisfies (NH1) with the same $\lambda$ and $\Lambda$.

\, (iii) \, for all $(x, t)\in B_M\times[-1, T]$ one has
$$
a^{ij}_n(x, t)\partial_{x_i}\partial_{x_j}W(x)+b^i_n(x, t)\partial_{x_i}W(x)\le
(\widetilde{C_{\mu}}-\delta_{\mu}(1+|x|^2)^{-1}B(\mu, x)-\Theta(\mu, x))W(x)
$$
with $\widetilde{C_{\mu}}=C_{\mu}+1$.

Let  $f_n$ be the solution of the Cauchy problem
$$
\partial_tf_n+\zeta_Ma^{ij}\partial_{x_i}\partial_{x_j}f_n+\zeta_Mb^i_n\partial_{x_i}f_n=0, \quad f_n|_{t=s}=\psi.
$$
Due to maximum principle  $\sup|f_n|=\max|\psi|$. Moreover, due to Lemma \ref{lem-grad-est}
one can derive $|\nabla_xf_n(x, t)|\le C'\sqrt{W(x)}$, and for  $x$ from the support of  $\varphi^U_K$
Lemma \ref{lem-dr} ensures
$$
|D^2_xf_n(x, t)|\le C\sqrt{T}(s-t)^{-1/2}\Bigl(1+B(\mu, x)+\sqrt{\Theta(\mu, x)}\Bigr)\sup_{|x-y|\le 1}\sqrt{W(y)}.
$$
Constants  $C'$ and $C$ do not depend on  $n$, $t$, $s$ and $K$.
Substituting $u=\varphi^U_Kf_n$ into identities of the form  (\ref{r2}) defining solutions $\mu$ and $\sigma$, one gets
$$
\int\psi\,d\mu_s=\int\varphi^U_Kf_n\,d\nu+
\int_0^s\int\bigl[\varphi^U_K(L_{\mu}-L_n)f_n+2\langle A\nabla\varphi^U_K, \nabla f_n\rangle
+f_nL_{\mu}\varphi_k\bigr]\,d\mu_t\,dt,
$$
$$
\int\psi\,d\sigma_s=\int\varphi^U_Kf_n\,d\nu+
\int_0^s\int\bigl[\varphi^U_K(L_{\sigma}-L_{n})f_n+2\langle A\nabla\varphi^U_K, \nabla f_n\rangle
+f_nL_{\sigma}\varphi^U_K\bigr]\,d\sigma_t\,dt.
$$
Here we used  $\zeta_M(x)=1$ for $x\in{\rm supp}\, \varphi_K^U$.
Subtracting one identity from another, one obtains
\begin{multline*}
\int\psi\,d(\mu_s-\sigma_s)\le
\int_0^s\int\bigl[\varphi^U_K|b(\mu)-b_n||\nabla f_n|+
\\
+\varphi^U_K|A(\mu)-A_n||D^2f_n|+2|A\nabla\varphi^U_K||\nabla f_n|
+|f_n||L_{\mu}\varphi^U_K|\bigr]\,d\mu_t\,dt+
\\
+\int_0^s\int\bigl[\varphi^U_K|b(\sigma)-b_n||\nabla f_n|+
\\
+\varphi^U_K|A(\sigma)-A_n||D^2f_n|+2|A\nabla\varphi^U_K||\nabla f_n|
+|f_n||L_{\sigma}\varphi^U_K|\bigr]\,d\sigma_t\,dt.
\end{multline*}
Notice that
$$
|b(\sigma)-b_n|\le |b(\sigma)-b(\mu)|+|b(\mu)-b_n|, \quad
|A(\sigma)-A_n|\le |A(\sigma)-A(\mu)|+|A(\mu)-A_n|.
$$
Applying  (DH4) and letting  $n\to\infty$, then  $K\to\infty$, one comes to
\begin{multline*}
\int\psi\,d(\mu_s-\sigma_s)\le C(1+\sqrt{T})\int_0^s\int\Biggl[|b(\mu)-b(\sigma)|\sqrt{W}+
\\
+|A(\sigma)-A(\mu)|(s-t)^{-1/2}\Bigl(1+B(\mu, x)+\sqrt{\Theta(\mu, x)}\Bigr)
\sup_{|x-y|\le 1}\sqrt{W(y)}\Biggr]\,d\sigma_t\,dt.
\end{multline*}
Applying  (DH3) and definition of metric  $w_W$, one obtains
$$
w_W(\mu_s, \sigma_s)\le CN\int_0^sG(w_W(\mu_t, \sigma_t))(1+(s-t)^{-1/2})\,dt, \quad N=\sup_t\int V\,d\sigma_t.
$$
Take $p>2$ and $p'=p/(p-1)<2$. Applying  H\"older's inequality, one has
$$
w_W(\mu_s, \sigma_s)^p\le \widetilde{C}\int_0^s G^p(w_W(\mu_t, \sigma_t))\,dt.
$$
 Gronwall's inequality yields $w_W(\mu_s, \sigma_s)=0$ for $s\in [0, T]$.
\end{proof}

\begin{remark}\rm
In the present work we have studied only the case of the nondegenerate diffusion matrix depending on solution;
the major reason is that we want to deal with metric $w_W$, and consider coefficients continuous with respect to it.
In the case of a degenerate diffusion matrix one can consider a new metric
$$
d(\mu, \sigma)=\sup\Bigl\{\int f\,d(\mu-\sigma):\, f\in C^{\infty}_0(\mathbb{R}^d),
|D f(x)|\le 1, |D^2f(x)|\le 1\Bigr\}.
$$
Then usuing estimates from  \cite[Theorem 3.2.4]{str} and repeating the proof of Theorem ~\ref{th2} it is possible to prove
uniqueness of solution to the Cauchy problem in the case of smooth coefficients, bounded together with their derivatives.
However, using this metric $d(\mu, \sigma)$ we can consider only convolutions with twice continuously differentiable
kernels (with bounded derivatives) as coefficients. To work with unbounded kernels
it might be interesting to study uniqueness problems for coefficients that are continuous with respect to
Zolotarev's metric
$$
Z_p(\mu, \sigma)=\sup\Bigl\{\int f\,d(\mu-\sigma):\, f\in C^1(\mathbb{R}^d),
|\nabla f(x)-\nabla f(y)|\le |x-y|(1+|x|^{p}+|y|^{p})\Bigr\}.
$$
This case is especially important when diffusion is nontrivial.
Note that some properties of Zolotarev's metric can be found in  \cite{Zolot}.
We only note that the relations between this metric and metrics $W_p$ and $T_p$ is not particulary studied.
\end{remark}

\begin{remark}\rm
Suppose that under  (DH2) and (DH4) from Theorem  \ref{th2}
one can choose constants  $\delta_{\mu}$, $C_{\mu}$ and $\beta(\mu)$
independent of  $\mu$ from some class $M_{T, \alpha}(V)$;
here  $\alpha\in C^{+}([0, T])$.
We remind that the class $M_{T, \alpha}(V)$ consists of all measures $\mu$ given by such flows of probability measures $\mu_t$
that
$$
\int V\,d\mu_t\le\alpha(t).
$$
Suppose probability measures  $\nu_1$ and $\nu_2$ on $\mathbb{R}^d$ satisfy $V\in L^1(\nu_1+\nu_2)$.
Assume that  $\mu^1(dxdt)=\mu_t^1(dx)\,dt$ and $\mu^2(dxdt)=\mu_t^2(dx)\,dt$ solve the Cauchy problem
 (\ref{e1}) with initial values $\nu_1$ and $\nu_2$ respectively and belong to the class $M_{T, \alpha}(V)$.
If (DH1)--(DH4) are fulfilled, then repeating the proof of Theorem \ref{th2}, one can derive
$$
w_W(\mu_t^1, \mu_t^2)\le w_W(\nu_1, \nu_2)+C\int_0^tG(w_W(\mu_s^1, \mu_s^2))\,ds, \quad t\in[0, T].
$$
Gronwall's inequality yields
$$
w_W(\mu_t^1, \mu_t^2)\le F^{-1}\Bigl(F(w_W(\nu_1, \nu_2))-Ct\Bigr),
$$
where $F(v)=\displaystyle\int_v^1\frac{du}{G(u)}$ and $F^{-1}$ is an inverse function to $F$.
In particular, if $G(u)=u$ we come to the estimate
$$
w_W(\mu_t^1, \mu_t^2)\le w_W(\nu_1, \nu_2)e^{Ct}.
$$
Analogous estimates hold true under the assumptions of Theorems  \ref{th1} and \ref{th5}.
\end{remark}

\section{\sc Examples of nonuniqueness.}

Let us consider several cases when
degeneracy of the diffusion matrix $A$, depending only on $\mu_t$,
yields nonuniqueness of solutions to the corresponding Cauchy problem.

\begin{theorem}\label{th6_1}
Set  $A=a(\mu_t)I$, $b=0$ where $a(\sigma)$ is a nonnegative function on some subset of
probability measures that has a single zero at  $\nu$.

The problem  {\rm (\ref{e1})} has at least two solutions (one of which is a stationary solution)
in each neighbourhood of zero iff for sufficiently small $\varepsilon$ the following  integral converges:
$$\int_0^\varepsilon \frac {dt}{f(t)}<+\infty$$
where  $f(\beta)$ is the value of the functional  $a(\mu)$ at the measure with density $\Gamma(\beta,\cdot)*_x\nu$,
here $\Gamma$ is the fundamental solution of the heat operator.
\end{theorem}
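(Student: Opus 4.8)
The plan is to reduce the nonlinear Cauchy problem to the scalar autonomous ODE $\beta'=f(\beta)$, $\beta(0)=0$, and then invoke the classical Osgood dichotomy. Since $A=a(\mu_t)I$ and $b=0$, for any solution $\mu=(\mu_t)$ the number $c(t):=a(\mu_t)$ is a nonnegative \emph{scalar} depending on $t$ only, and (\ref{r1}) reads $\int\varphi\,d\mu_t=\int\varphi\,d\nu+\int_0^t c(s)\int\Delta\varphi\,d\mu_s\,ds$; thus $\mu$ solves the linear equation $\partial_t\mu_t=c(t)\Delta\mu_t$. First I would record that $t\mapsto c(t)$ is Borel and locally integrable on $[0,T]$ (this is contained in the definition of a solution once the a priori second-moment bound $\sup_t\int|x|^2\,d\mu_t<\infty$ is available, which here follows from $L_\mu(1+|x|^2)=2d\,c(t)$). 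Passing to Fourier transforms, which is legitimate after the standard cut-off of $e^{i\langle\xi,\cdot\rangle}$ using this moment bound, yields $\partial_t\widehat\mu_t(\xi)=-c(t)|\xi|^2\widehat\mu_t(\xi)$, hence $\widehat\mu_t(\xi)=\widehat\nu(\xi)\,e^{-|\xi|^2\beta(t)}$ with $\beta(t):=\int_0^t c(s)\,ds$, i.e.\ $\mu_t=\Gamma(\beta(t),\cdot)*_x\nu$. Self-consistency then forces
\[
\beta'(t)=c(t)=a\bigl(\Gamma(\beta(t),\cdot)*_x\nu\bigr)=f(\beta(t))\quad\text{a.e.},\qquad\beta(0)=0 .
\]
Conversely, for any absolutely continuous nondecreasing $\beta$ with $\beta(0)=0$ solving $\beta'=f(\beta)$, the curve $\mu_t:=\Gamma(\beta(t),\cdot)*_x\nu$ is a genuine solution of (\ref{e1}); one checks (\ref{r1}) directly from the heat semigroup and the absolute continuity of $\beta$. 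Moreover distinct $\beta$'s give distinct curves (apply the Fourier transform again and use that $\widehat\nu$ is nonzero near the origin). So solutions of the Cauchy problem correspond bijectively to such $\beta$.

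Next I would count those $\beta$. Here $f\ge0$ is continuous near $0$ with $f(0)=a(\nu)=0$ and $f>0$ on a punctured neighbourhood of $0$, since $\nu$ is the only zero of $a$ and $\Gamma(\beta,\cdot)*_x\nu\neq\nu$ for $\beta>0$ (again because $\widehat\nu$ does not vanish identically). The trivial solution $\beta\equiv0$ always exists and produces the stationary solution $\mu_t\equiv\nu$. If $\int_{0+}dt/f(t)=+\infty$, then $\beta\equiv0$ is the \emph{only} solution: were $\beta(t_0)>0$, set $t_1=\sup\{t\le t_0:\beta(t)=0\}$; on $(t_1,t_0]$ the map $\beta$ is strictly increasing with $\beta'=f(\beta)$, and the substitution $s=\beta(t)$ gives $t_0-t_1=\int_{0+}^{\beta(t_0)}ds/f(s)=+\infty$, a contradiction, whence uniqueness. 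If instead $\int_{0+}dt/f(t)<+\infty$, put $G(\beta)=\int_0^\beta ds/f(s)$ on a small $[0,\delta)$; $G$ is continuous, strictly increasing, $G(0)=0$, and $\beta:=G^{-1}$ is a $C^1$ solution with $\beta(t)>0$ for $t>0$, distinct from $\beta\equiv0$. The corresponding $\mu_t=\Gamma(\beta(t),\cdot)*_x\nu$ differs from $\nu$ on every $(0,\varepsilon]$, while $\beta(\varepsilon)\to0$ forces $\mu_\varepsilon\to\nu$; so on arbitrarily short time intervals there are two solutions lying in an arbitrarily small neighbourhood of the stationary one. The two implications give the claimed equivalence.

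The hard part will be the reduction step, namely showing that \emph{every} solution of the nonlinear problem has the form $\Gamma(\beta(t),\cdot)*_x\nu$. The delicate points are: (i) producing the measurability and local integrability of $t\mapsto a(\mu_t)$ together with the uniform moment bound that justifies testing against $e^{i\langle\xi,\cdot\rangle}$ (alternatively, one performs the time substitution $\tau=\beta(t)$, notes that $\mu$ is constant on level sets of $\tau$, and invokes uniqueness for the heat equation among probability-measure solutions, which is covered by the linear theory cited in the introduction); and (ii) checking that $f(\beta)=a(\Gamma(\beta,\cdot)*_x\nu)$ is continuous and strictly positive for small $\beta>0$, which is exactly what the ODE comparison and the construction of $G^{-1}$ need, and which relies on the continuity of the functional $a$ with respect to the convergence built into the admissible class of measures. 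Granting these facts, everything else reduces to the elementary ODE bookkeeping above.
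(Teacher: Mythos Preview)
Your proof is correct and follows the same overall architecture as the paper's: reduce the nonlinear problem to the scalar autonomous ODE $\beta'=f(\beta)$, $\beta(0)=0$, and apply the Osgood dichotomy. The one technical difference lies in the reduction step: you identify $\mu_t=\Gamma(\beta(t),\cdot)*_x\nu$ directly via Fourier transforms (using the second-moment bound to justify testing against $e^{i\langle\xi,\cdot\rangle}$), whereas the paper performs the time change $\tau(t)=\int_0^t a(\mu_s)\,ds$, observes that $\widetilde\mu_\tau:=\mu_{t(\tau)}$ solves the standard heat equation $\partial_\tau\widetilde\mu=\Delta\widetilde\mu$ with initial datum $\nu$, and then invokes uniqueness for the latter to get $\widetilde\mu_\tau=\Gamma(\tau,\cdot)*_x\nu$. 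Both routes yield the same self-consistency relation $\tau'=f(\tau)$; your Fourier argument is more self-contained but requires the a priori moment estimate, while the paper's time change offloads the issue onto the linear uniqueness theory. Since you already flag the time-substitution alternative in your discussion of the ``hard part,'' the two proofs are essentially equivalent.
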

\begin{proof}

Suppose there are two different solutions in the sense of the identity~(\ref{r1})
and one of them is a stationary one that identically equals $\nu$,
second doesn't equal  $\nu$ in some deleted neighbourhood of zero.

Obviously $a(\nu)=0$.  If  $a(\mu_t)=0$ in some neighbourhood of zero for a.e.  $t$,
then the measure $\mu_t$ is constant in this neighbourhood due to the equation.
Thus, without lack of generality, one can assume that  $a(\mu(t))>0$ for $t\in(0,t_0)$.
Suppose that the measure $\mu$ satisfies the problem (\ref{e1}) with coefficients as above,
and  $a(\mu_t)>0$ for $t>0$.

Set $g(t):=a(\mu_t)$ and define the function $\tau(t)$ such that $\tau'(t)=g(t),\quad \tau(0)=0,$
 that is a one-to-one correspondance of segments $[0,t]$ and $[0,\tau(t_0)]$.
 Notice that the measure
$\tilde\mu_\tau=\mu|_{t=t(\tau)}$ satisfies the problem
$\partial_\tau \mu=\Delta\mu, \quad \mu|_{t=0}=\nu$
in the sense of the identity~(\ref{r1}).

This problem has a unique solution given by
$\mu(\tau)=\Gamma(\tau,\cdot)*_x\nu$
where  $\Gamma(t,x)$ is a fundamental solution of the heat operator $\partial_t-\Delta$.

Let us go back to the functional $a(\mu)$. Denote $f(\beta)$ its value on the measure with density $\Gamma(\beta,\cdot)*_x\nu$
and notice that $f(0)=0$. The definition of  $\tau(t)$ ensures that this function solves
$\tau'=f(\tau)$ with initial condition $\tau(0)=0$.
Due to Osgood's criterion, if such  $\tau$ exists, then the integral
$\displaystyle\int\limits_0 \frac{dt}{f(t)}$
converges. Moreover, if this integral is finite, then one can find such function $\tau(t)$
 that  $\tau'=f(\tau)$. Then measure $\mu(t)=\Gamma(\tau(t),\cdot)*_x\nu$
 solves the problem ~(\ref{e1}).
\end{proof}

Let us show the possible application of our criterion.

\begin{example}\rm
Suppose  $d=1$ and consider $a(\mu)$ of the form
$$ a(\mu_t)=\left|\frac{\sqrt{\pi}}{2}\int |x| d\mu_t\right|^{2\alpha }$$
with  $\alpha>0.$

Then the problem  (\ref{e1}) with initial condition $\nu=\delta_0$,
where $\delta_0$ is a Dirac measure in zero, has at least two solutions
in the class of measures with $\displaystyle\int |x|d\mu<\infty$ for  $\alpha<1$.

Indeed, the functional $a$ at a measure with density $\Gamma(\beta,x)$ equals
$$f(\beta)=\left(\frac{\sqrt{\pi}}{2\sqrt {4\pi \beta}}\int|x|  e^{-\frac{x^2}{4\beta}}dx\right)^{2\alpha}=\left(\sqrt{\beta}\int_0^\infty e^{-\frac{x^2}{4\beta}} d\left(\frac{x^2}{4\beta}\right)\right)^{2\alpha}=\beta^\alpha$$
and the integral  $\displaystyle\int_0^\varepsilon \frac{dx}{f(x)}$ converges.

For $\alpha\ge 1$ the stationary solution is unique.

Let us estimate the difference $a(\mu)-a(\sigma)$ for $\alpha=\frac12$.
Taking  $\psi_n\in C^\infty_0(\mathbb R)$,
$|\nabla \psi_n|<1$ such that
$$\left|\int(\psi-|x|)d\sigma\right|+\left|\int(\psi-|x|)d\sigma\right|<1/n,$$
we get
$$\frac{2}{\sqrt{\pi}}(a(\mu)-a(\sigma))=\int |x| d(\mu-\sigma)
=\int \psi_n d(\mu-\sigma)+\int (|x|-\psi_n) d(\mu-\sigma)\leq W_1(\mu,\sigma)+\frac 1n.
$$

Letting $n\to\infty$, we arrive at  $|a(\mu)-a(\sigma)|\leq\frac{\sqrt{\pi}}{2} W_1(\mu,\sigma)$.

This example shows that one can not refuse of the condition (NH1) in Theorem (\ref{th5}).

\end{example}

One can save uniqueness if one imposes more
restrictive assumptions on the functional~$a(\mu)$.

\begin{example}\rm
Suppose $d=1$ and $$a(\mu_t)=\int K(x)d\mu_t$$
where $K(x)$ is a nonnegative function with two continuous uniformly bounded derivatives,
$a(\mu)=0$ only at $\mu=\nu$ and $|x|\in L^1(\nu)$.
Then the problem (\ref{e1}) has a unique solution.

Let us estimate
$$f(\beta)=\int\int K(x) \Gamma(\beta,x-y) \nu(dy)dx=\int K*\Gamma(\beta,\cdot)d\nu$$

Using the properties of the fundamental solution, one can get the bounds for the derivatives:
$$f'(\beta)=\int K*\partial_\beta\Gamma(\beta,\cdot)\nu=
\int K*\Delta\Gamma(\beta,\cdot)d\nu=\int \Delta K\Gamma(\beta,\cdot)d\nu\leq C
$$
for each $\beta$.
In this case $f(\beta)\leq C\beta$ and
$\displaystyle\int \frac{d\beta}{f(\beta)}\geq C\int \beta^{-1}d\beta=+\infty$,
which ensures uniqueness.
\end{example}

\begin{example}\rm
Generally speaking, $C^2$-smoothness of the kernel $K$ cannot be replaced with Holder continuity of the first derivatives.

Consider a functional  $a(\mu)$ of the form
$$ a(\mu_t)=\int |x|^{2\alpha} d\mu_t$$
 with  $\alpha<1,$ $\nu=\delta_0$.

Since
$$f(\beta)=\frac{1}{\sqrt {4\pi \beta}}\int|x|^{2\alpha}e^{-\frac{x^2}{4\beta}}dx=C\beta^\alpha,$$
the solution of  (\ref{e1}) is not unique.
\end{example}

Nonuniqueness of solutions is, generally speaking,
preserved after adding terms of the first order. This can be easily seen from the following
example:
\begin{example}\rm
Suppose the problem
$$\partial_t \mu=\Delta(a(\mu)\mu),\quad \mu|_{t=0}=\nu, $$
has at least two solutions.

Then there exists such a functional $b(\mu)$ satisfying Lipschitz condition with respect to Kantorovich 1-metric,
that is nonzero of a subset of probability measures with finite first moment $\int|x|d\mu$, such that
the corresponding problem
$$\partial_t \mu=\partial_x\partial_x(a(\mu)\mu)+\partial_x(b(\mu)\mu),\quad \mu|_{t=0}=\nu$$
also has at least two solutions.

We construct the functional $b(\mu)$ as follows. Let  $\mu$ and $\sigma$ be two different solutions of the initial problem.
Set
$$b(\nu)=\inf\Bigl\{W_1(\mu_t,\nu), W_1(\sigma_t,\nu), t>0\Bigr\}.$$
Since measures $\Gamma(\beta,x)dx$ do not form a dense set in the space of probability measures,
$b(\nu)$ does not an identically zero function. Obviously
$$|b(\mu)-b(\sigma)|\leq W_1(\mu,\sigma). $$
Moreover, $\mu$ and $\sigma$ solve the constructed Cauchy problem.
\end{example}

Nevertheless, in some cases adding  first derivatives ensures uniqueness. Let us provide an example of this phenomenon.

\begin{example}\rm
The Cauchy problem
$$\partial_t \mu=\partial_x^2(a(\mu)\mu)+\lambda\partial_x\mu,\quad \mu|_{t=0}=\delta_0, $$
where $d=1$, $\displaystyle a(\mu)=\int_\mathbb{R} |x|\mu_t(dx)$
has a unique solution for each  $\lambda\ne 0$.

Consider for simplicity  $\lambda=1$ (one can assure this by scaling).
The change of variables $y=x+t$ yields to the problem:
\begin{equation}\label{problem_a_1'}
\partial_t \mu=\partial_y^2(\widetilde{a}(\mu)\mu),\quad \mu|_{t=0}=\delta_0
\end{equation}
where $\displaystyle\widetilde{a}(t,\mu)=\int_\mathbb{R} |y-t|\mu_t(dy). $

Similarly to Theorem \ref{th6_1}, it is sufficient to show that there exists a unique
function~$\tau(t)$ satisfying
$$
\tau'=\frac{1}{\sqrt{4\pi \tau}}\int|y-t|e^{-\frac{y^2}{4\tau}}dy, \quad{\tau(0)=0}
$$
or equivalently
$\tau'=
2\frac{\sqrt\tau}{\sqrt{\pi}}e^{-\frac{t^2}{4\tau}}+2t\Phi(\frac{t^2}{4\tau})-t$
where $\Phi(x)=\pi^{-1/2}\int_{-\infty}^xe^{-y^2}dy.$
One can easily derive the bound $\tau\le Ct^2$ for some constant $C$.

Denoting  $\sqrt{\tau}=t\,g(t)$ for $t>0$, we arrive at
\begin{equation}\label{g_probl}
t\,g'=\frac{e^{-\frac{1}{4g^2}}}{\sqrt\pi}+\frac1{g}\Phi(\frac{1}{2g})-\frac{1}{2g}-g\equiv \mathcal{F}(g).
\end{equation}

Since $\mathcal{F}'(g)\leq {-1}$, the function  $\mathcal{F}(g)$
is monotone and decreasing on $g>0$ from $+\infty$ to $-\infty$ and, hence,
has a unique zero at  $g_0<1$, corresponding to an asymptotically stable solution $g=g_0$.
Moreover, each solution of this equation for $t\to 0$
 tends either to $g_0$ or to $\pm\infty$.
Taking into account that  $|g(0)|<\infty$, we come to uniqueness of the solution $g=g_0$ and of the corresponding solution $\mu=\Gamma(g_0t^2, x+t)dx$
 to the problem~(\ref{e1}).
\end{example}

\vskip 2. ex
\centerline{  \bf   Acknowledgements.  }

The authors are grateful to Prof. Vladimir I. Bogachev for
fruitful discussions and valuable remarks.

The work of O.A.Manita and S.V.Shaposhnikov was partially supported by
RFBR projects  12-01-33009, 14-01-00237. S.V.Shaposhnikov was partially supported by
the Simons Foundation and RFBR projects 14-01-91158, 14-01-90406-Ukr-f-a, 14-01-00736,
SFB 701 of the Bielefeld University;
M.S.Romanov was partially supported by the Government grant of the Russian Federation ''On measures designed to attract leading scientists
to Russian institutions of higher education"  No.
11.G34.31.0054, signed by the Ministry of Education and Science of the Russian
Federation, the leading scientist, and Lomonosov Moscow State University.

\end{document}